\author{Ulf K\"uhn}
\address{Fachbereich Mathematik, Universit\"{a}t Hamburg, Bundesstrasse 55,
20146 Hamburg, Germany}
\email{kuehn@math.uni-hamburg.de }
\author{J. Steffen M\"uller}
\address{Institut f\"ur Mathematik, Carl von Ossietzky
Universit\"{a}t Oldenburg, 26111 Oldenburg, Germany}
\email{jan.steffen.mueller@uni-oldenburg.de }
\title[Constructing elements of $K_2$ of curves]{A geometric approach to constructing elements of $K_2$ of curves}
\newtheorem{thm}{Theorem}[section]
\newtheorem{prop}[thm]{Proposition}
\newtheorem{lemma}[thm]{Lemma}
\newtheorem{cor}[thm]{Corollary}
\newtheorem{conj}[thm]{Conjecture}
\theoremstyle{remark}
\newtheorem{rk}[thm]{Remark}
\newtheorem{constr}[thm]{Construction}
\newtheorem{defn}[thm]{Definition}
\newtheorem{ex}[thm]{Example}
\newcommand\Q{\mathbb{Q}}
\newcommand\C{\mathbb{C}}
\newcommand\Z{\mathbb{Z}}
\newcommand\N{\mathbb{N}}
\newcommand\A{\mathbb{A}}
\newcommand\R{\mathbb{R}}
\newcommand\p{\mathfrak{p}}
\newcommand\Spec{\mathop{\rm Spec}\nolimits}
\renewcommand\O{\mathcal{O}}
\newcommand\ord{\mathop{\rm ord}\nolimits}
\renewcommand{\div}{\operatorname{div}}
\newcommand{\disc}{\operatorname{disc}}
\newcommand{\Pic}{\operatorname{Pic}}
\newcommand{\BP}{{\mathbb P}}
\newcommand{\Ka}{\overline{K}}
\newcommand{\Qa}{\overline{\Q}}
\newcommand{\KC}{\mathcal{C}}
\begin{document}
 
\date{\today}
\begin{abstract}
  We present a framework for constructing examples of smooth projective curves over number
  fields with explicitly given elements in their second $K$-group using elementary algebraic geometry.
  This leads to new examples for hyperelliptic curves and smooth plane quartics.
  Moreover, we show that most previously known constructions can be reinterpreted using our
  framework. 
\end{abstract}
\maketitle
\section{Introduction}\label{sec:intro}
The Beilinson conjectures are of fundamental importance in algebraic $K$-theory and
arithmetic geometry, predicting a relation between special values of $L$-functions and
regulators of certain higher $K$-groups of smooth projective varieties defined over number
fields.
See~\cite{schneider} for an introduction.

Let $C$ denote a smooth projective geometrically irreducible curve of genus $g$ defined over a number field $K$ with
ring of integers $\O_K$. 
We denote by $K_2^T(C)$ the tame second $K$-group of $C$, defined in Section~\ref{sec:k2}.
A special case of Beilinson's conjecture predicts that 
that a certain subgroup $K_2(C;\,\O_K)$ of $K_2^T(C)/\mathrm{torsion}$ is free of rank
$g\cdot[K:\Q]$.
In order to prove this conjecture or at least test it numerically in examples, one needs a method to
come up with enough independent elements of $K_2(C;\,\O_K)$.
In general it is quite difficult to construct elements of $K_2^T(C)$ (not to mention
$K_2(C;\, \O_K))$ for a given curve $C$.
Apart from the work of 
Beilinson~\cite{beilinson} (for modular curves over abelian number fields) and
Deninger~\cite{deninger} (for elliptic curves with complex multiplication)
no systematic constructions are known to date.
Instead, a number of ad hoc approaches have been developed, see for
instance~\cite{bloch-grayson},~\cite{SR},~\cite{ddz} and~\cite{dl}.
These produce certain families of curves for which it is known that many elements of
$K^T_2(C)$ exist.

In this note we present a geometric approach to constructing algebraic curves $C$
together with elements in $K^T_2(C)$ using elementary algebraic geometry.
Our idea is as follows: We first choose plane curves 
\[
  C_1,\ldots, C_m \subset \BP^2_K
\]
of respective degrees $d_1, \ldots, d_m$; 
then we consider functions $f_{kl}$ on $\BP^2_K$ such that 
\[
 \div ( f_{kl} ) = d_l \cdot C_k -  d_k \cdot C_l.
\]
These are determined by the equations of the curves $C_1,\ldots,C_m$ up to scaling. 
We then construct a plane curve $C/K$ such that many of the classes 
\begin{align}\label{functions}
    \big \{ f_{kl}\big |_C, f_{k'l'}\big |_C \big \}   \in K_2(K(C))
\end{align}
represent elements in $K_2^T(C)/\mathrm{torsion}$ after a suitable scaling of the functions $f_{kl}$ and
$f_{k'l'}$.
We require that the curves $C_i$ intersect the curve $C$ in very few points and that 
the intersection multiplicities 
between $C$ and $C_i$ at these points satisfy certain requirements. This yields
a system of equations for the coefficients of $C$ that in many cases has a solution.
In these cases, we then change our point of view and treat the coefficients of the curves
$C_i$ and the coordinates of the points of intersection as indeterminates.  
Using this approach, we finally get a parametrization of a family of curves
such that every curve in this family has a number of known representatives of elements in
$K^T_2(C)/\mathrm{torsion}$.

In fact, we show that we can reinterpret most known constructions using our method and we can
also come up with some new families of smooth plane quartics and hyperelliptic curves.
We focus on the geometric aspects, so we only discuss integrality of our elements
in some examples (for some hyperelliptic curves and smooth plane quartics) and
we touch upon independence of the elements only briefly.
It would be an interesting project to check independence in the families constructed in
Sections~\ref{sec:spq},~\ref{sec:conics} and~\ref{sec:nekovar}, for instance using a limit
argument as in~\cite{deJeu:limit} or~\cite{dl}.

\begin{rk}\label{rk:dl}
  In recent work~\cite{dl} de Jeu and Liu manage to construct $g$ independent  elements of
  $K_2^T(C)$ for certain families of curves, including the hyperelliptic examples
  of~\cite{ddz} discussed in Section~\ref{sec:hyp}, but also non-hyperelliptic ones.
  Furthermore, they show that in some of these families, all of the elements are in fact
  integral.
  We would like to point out that their construction also fits into our general framework:
  They begin with a number of lines in $\A^2$; 
  their curves are then given as the smooth projective model of a certain
  singular affine curve constructed using these lines.
\end{rk}

The organization of the paper is as follows:
In Section~\ref{sec:k2} we give a rather gentle and elementary introduction to $K_2$ of
curves, closely following~\cite{ddz}.
The constructions in~\cite{bloch-grayson} and~\cite{ddz} work for (hyper-) elliptic curves 
and use torsion divisors, following an approach which goes back to work of Bloch and which
we recall in Section~\ref{sec:torsion}.
Then we show in Section~\ref{sec:hyp} that our method gives a simple way to obtain 
many examples of hyperelliptic curves with many explicitly given elements, including
the examples of~\cite{ddz}. 
Still using torsion divisors, we apply our approach to smooth plane quartics in
Section~\ref{sec:spq}, where the curves $C_i$ are lines, and in Section~\ref{sec:conics}, where
the curves $C_i$ are conics or lines.
Finally, we show in Section~\ref{sec:nekovar} that we are not restricted to torsion divisors 
 by generalizing a construction for elliptic curves discussed in~\cite{SR}, where it is
attributed to Nekov\'a\v{r}, to certain curves of higher genus.

We would like to thank Hang Liu for helpful conversations and Rob de Jeu for helpful
conversations and a careful reading of an earlier version of this paper, which resulted in
many improvements, in particular the ``mixed'' case of Proposition~\ref{prop:hyp_ex_even},
see Remark~\ref{rk:mixed}.
The second author was supported by DFG-grant KU 2359/2-1.

\section{$K_2$ of curves and Beilinson's conjecure}\label{sec:k2}
We give a brief down-to-earth introduction to Beilinson's conjecture on $K_2$ of curves
over number fields, following the discussion of~\cite[\S3]{ddz}.
The definition of higher $K$-groups of $C$ is due to Quillen~\cite{quillen} and is rather
involved.
However, Beilinson's original conjecture can be formulated in terms of the
tame second $K$-group $K_2^T(C)$ of $C$, whose definition is quite simple in comparison,
and we follow this approach here.

Let $C$ be a smooth, projective, geometrically irreducible curve of genus $g$
defined over a number field $K$ with ring of integers $\O_K$ and fixed algebraic closure
$\Ka$.
By Matsumoto's Theorem~\cite[Theorem~11.1]{milnor}, the second $K$-group of the
field $K(C)$ is given by
\[
    K_2(K(C)) = K(C)^\times\otimes_\Z K(C)^\times/\langle f\otimes (1-f),\; f \in
    K(C)^\times\setminus\{1\}\rangle.
\]
If $f,h \in K(C)^\times$, then we write $\{f, h\}$ for the class of $f
\otimes h$ in $K_2(K(C))$.
Hence $K_2(K(C))$ is the abelian group with generators $\{f,h\}$ and relations
\begin{align*}
  \{f_1f_2,h\}&=\{f_1,h\}+\{f_2,h\}\\
 \{f,h_1,h_2\}&=\{f,h_1\}+\{f, h_2\}\\
    \{f, 1-f\}&=0\textrm{ for } f \in K(C)^{\times}\setminus\{1\}.
\end{align*}
  For $P$ in the set $C^{(1)}$ of closed points of $C$ 
  and $\{f, h\} \in K_2(K(C))$, we define the {\em tame symbol}
\[
    T_P(\{f, h\}) = (-1)^{\ord_P(f)\ord_P(h)}
    \frac{f^{\ord_P(h)}}{h^{\ord_P(f)}}(P)\in K(P)^\times,
\]
and we extend this to $K_2(K(C))$ by linearity.
Then we have the product formula
\begin{equation}\label{eq:prod_form}
  \prod_{P \in C^{(1)}} N_{{K(P)}/K}\left(T_P(\alpha)\right) = 1 \quad\textrm{ for
    all}\;\alpha \in K_2(K(C)),
\end{equation}
which generalizes Weil's reciprocity law, see~\cite[Theorem~8.2]{bass}.
Setting
\[T = \prod_{P \in C^{(1)}} T_P,\]
we define the {\em tame second $K$-group}
\[
  K_2^T(C) = \ker\left(T:K_2(K(C)) \to \bigoplus_{P \in C^{(1)}}
    K(P)^\times\right)
\]
of $C$.
Note that in general $K^T_2(C)$ differs from the second $K$-group $K_2(C)$
associated to $C$ by Quillen in~\cite{quillen}.
However, we get an exact sequence
\[
\xymatrix{
  \bigoplus_{P \in C^{(1)}}K_2(K(P)) \ar[r]& K_2(C) \ar[r]& K_2(K(C)) \ar[r]^-{T}&
  \bigoplus_{P \in C^{(1)}}K(P)^\times
}
\]
from~\cite{quillen} and since $K_2$ of a number field is torsion~\cite{garland}, this implies
\[
    K^T_2(C)/\textrm{torsion} \cong K_2(C)/\textrm{torsion}.
\]
As our motivation is Beilinson's conjecture on $K_2(C)$, which does
not depend on the torsion subgroup of $K_2(C)$ at all, we only discuss
$K_2^T(C)$.

By~\cite{bloch-grayson}, the correct $K$-group for the statement of
Beilinson's conjecture is not the quotient $K^T_2(C)/\textrm{torsion}$, but a subgroup
$K_2(C;\O_K)$ of $K^T_2(C)/\textrm{torsion}$ defined by certain  integrality conditions.
Let $\KC\to\Spec(\O_K)$ be a proper regular model of $C$.
For an irreducible component $\Gamma$ of a special fiber $\KC_\p$ of $\KC$ and $\{f,
h\} \in K_2^T(C)$ we define
\[
    T_\Gamma(\{f, h\}) = (-1)^{\ord_\Gamma(f)\ord_\Gamma(h)}
    \frac{f^{\ord_\Gamma(h)}}{h^{\ord_\Gamma(f)}}(\Gamma) \in
    k_\p(\Gamma)^\times,
\]
where $k_\p(\Gamma)$ is the function field of $\Gamma$ and $\ord_\Gamma$ is the normalized discrete
valuation on the local ring $\O_{\KC,\Gamma}$, extended to its field of fractions $K(C)$.
We extend $T_\Gamma$ to $K_2^T(C)$ by linearity and set
\[
    K_2^T(\KC) = \ker\left(K^T_2(C) \to
    \bigoplus_{\p\subset \O_K}\bigoplus_{\Gamma \subset \KC_\p}
    k_\p(\Gamma)^\times\right).
\]
Here the $\Gamma$-component of the map is given by $T_\Gamma$ and the
sums run through the finite primes $\p$ of $\O_K$ and the irreducibe
components of $\KC_\p$, respectively.

One can show (see~\cite[Proposition~4.1]{dl}) that $K^T_2(\KC)$ does
not depend on the choice of $\KC$, so that 
\[
    K_2(C;\O_K) := K_2^T(\KC)/\textrm{torsion}
\]
is well-defined.
We call an element of $K_2^T(C)/\textrm{torsion}$ {\em integral} if it
lies in $K_2(C;\O_K)$.

Finally, we can state a weak version of Beilinson's conjecture (as modified by Bloch).

\begin{conj}\label{conj:beilinson}
    Let $C$ be a smooth, projective, geometrically irreducible curve of
    genus $g$ defined over a number field $K$.
    Then the group $K_2(C;\O_K)$ is free of rank $g\cdot[K:\Q]$.
\end{conj}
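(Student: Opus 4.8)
The plan is to split the statement into two independent halves: finite generation of $K_2(C;\O_K)$, and the computation of its rank. Since $K_2(C;\O_K)$ is by construction a subgroup of the torsion-free group $K_2^T(C)/\mathrm{torsion}$, it is itself torsion-free, so once finite generation is known, freeness is automatic and the whole content is the rank formula. It should be said at once that Conjecture~\ref{conj:beilinson} is a theorem only in very special cases (modular curves over abelian fields \cite{beilinson}, CM elliptic curves \cite{deninger}), and that a proof in general is far beyond the methods of this paper; what follows is the natural line of attack together with an indication of where it breaks down.

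\emph{Finite generation.} One would relate $K_2(C;\O_K)$ to the motivic cohomology group $H^2_{\mathcal{M}}(\KC,\Q(2))$ of a proper regular model $\KC\to\Spec(\O_K)$ — the independence of $\KC$ recalled above makes this plausible — and appeal to the finite generation of motivic cohomology of regular schemes of finite type over $\Z$. This is a case of Bass's conjecture and is itself open; in particular, even this first step is not currently available, and all that is unconditional is torsion-freeness.

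\emph{Upper bound for the rank.} Granting finite generation, one would apply the Beilinson regulator from $K_2(C;\O_K)\otimes\R$ to a Deligne--Beilinson cohomology group of real dimension exactly $g\cdot[K:\Q]$; then $\operatorname{rank} K_2(C;\O_K)\le g\cdot[K:\Q]$ is equivalent to injectivity of this regulator. That injectivity is part of Beilinson's conjecture proper and is the genuinely hard analytic point: it is intertwined with the expected order of vanishing of $L(h^1(C),s)$, and — in contrast with the lower bound — it cannot be approached by writing down elements, since it amounts to ruling out the existence of ``too many'' of them.

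\emph{Lower bound for the rank.} Here the task is to produce $g\cdot[K:\Q]$ integral, $\Z$-linearly independent elements of $K_2^T(C)/\mathrm{torsion}$. This is exactly what the remaining sections do for suitable families: starting from plane curves $C_1,\dots,C_m$ and the functions $f_{kl}$ with $\div(f_{kl})=d_l\cdot C_k-d_k\cdot C_l$, one arranges the intersection behaviour with a curve $C$ so that many symbols $\{f_{kl}|_C,\,f_{k'l'}|_C\}$ lie in $K_2^T(C)/\mathrm{torsion}$, one checks integrality via the local tame symbols $T_\Gamma$ on a regular model, and one verifies independence — for instance by a specialization or limit argument in the style of \cite{deJeu:limit} and \cite{dl}, or by numerical evaluation of regulators. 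The main obstacle is that no general construction is known — analogous to Beilinson's Eisenstein symbols or Deninger's elliptic units, which account for essentially the only cases where the full conjecture is known — producing the \emph{complete} predicted supply of integral elements for an arbitrary curve; the construction developed here, like those of \cite{bloch-grayson}, \cite{ddz} and \cite{dl}, realizes the lower bound only for special families, and even there independence is normally checked case by case.
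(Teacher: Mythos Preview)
The statement is a \emph{conjecture}, not a theorem, and the paper offers no proof of it; immediately after stating it the authors note that ``even Conjecture~\ref{conj:beilinson} is still wide open in general.'' Your proposal correctly recognizes this and, rather than pretending to a proof, lays out the standard three-part strategy (finite generation via Bass's conjecture, the upper bound via injectivity of the regulator, the lower bound via explicit constructions of integral elements) together with an honest account of where each step is currently blocked. There is thus no ``paper's own proof'' to compare against; your discussion is an accurate summary of the situation and is fully consistent with how the paper treats the statement.
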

Finite generation of $K_2(C;\O_K)$ was already conjectured by Bass.
The full version of Beilinson's conjecture on $K_2$ can be found in~\cite{ddz}.
It relates the special value of the Hasse-Weil $L$-function of $C$ at $s=2$ to the
determinant of the matrix of values of a certain regulator pairing
\[
  H_1(C(\C), \Z) \times K^T_2(C)/\textrm{torsion}\to\R
\]
at the elements of a basis of the anti-invariants of $H_1(C(\C),\Z)$ under complex
conjugation and at the elements of a basis of $K_2(C,\O_K)$.

However, even Conjecture~\ref{conj:beilinson} is still wide open in general,
largely due to the fact that it is rather difficult to construct elements of $K_2$.
Therefore new methods for such constructions are needed.

\section{Torsion construction}\label{sec:torsion}
Most of the known constructions of elements of $K_2$ of curves use {\em torsion
divisors}, in other words, divisors on the curve of degree zero whose divisor 
classes in the Jacobian of the curve are torsion.
For simplicity, we restrict to the case $K=\Q$; all constructions can
be extended to arbitrary number fields using straightforward
modifications.

We first recall the following construction from~\cite{ddz}, originally due to Bloch:
\begin{constr}\label{constr:torsion}(\cite[Construction~4.1]{ddz})
    Let $C/\Q$ be a smooth, projective, geometrically irreducible curve, let $h_1,h_2,h_3
    \in \Q(C)^*$ and let $P_1,P_2,P_3 \in C(\Q)$ be such that
    \[
        \div(h_i) = m_i(P_{i+1}) - m_i(P_{i-1}),\quad i \in \Z/3\Z,
    \]
    where $m_i \in \N$ is the order of the class of $(P_{i+1}) - (P_{i-1})$ in $\Pic^0(C)$.
    Then we define symbols 
    \[
        S_i =\left\{\frac{h_{i+1}}{h_{i+1}(P_{i+1})},
        \frac{h_{i-1}}{h_{i-1}(P_{i-1})}\right\} \in K_2(\Q(C)),\quad i \in \Z/3\Z.
    \]
\end{constr}

We summarize the most important facts about Construction~\ref{constr:torsion}:

\begin{prop}\label{prop:constr_props}
    Keep the notation of Construction~\ref{constr:torsion}.
    \begin{enumerate}[(i)]
        \item The symbols $S_i$ are elements of $K^T_2(C)$.
        \item There is a unique element $\{P_1,P_2,P_3\} \in K^T_2(C)/\mathrm{torsion}$ such that
            \[
                S_i = \frac{\mathrm{lcm}(m_1,m_2,m_3)}{m_i}\{P_1,P_2,P_3\}, \quad i \in \{1,2,3\}
            \]
            in $K^T_2(C)/\mathrm{torsion}$.
          \item  If $\sigma\in \mathfrak{S}_n$ is an even permutation, then we have $\{P_{\sigma(1)},P_{\sigma(2)},P_{\sigma(3)}\} =
            \{P_1,P_2,P_3\}$. If $\sigma\in \mathfrak{S}_n$ is odd, then
            $\{P_{\sigma(1)},P_{\sigma(2)},P_{\sigma(3)}\} =
            -\{P_1,P_2,P_3\}$. 
        \item Suppose that there exists, in addition, a point $P_4 \in
            C(\Q)$ such that all
            differences $(P_i) - (P_j)$ are torsion divisors for $i,j \in \{1,\ldots,4\}$. 
            Then the following elements are linearly
            dependent in $K^T_2(C)/\mathrm{torsion}$:
   \[
   \{P_1,P_2,P_3\},\;\{P_1,P_2,P_4\},\;\{P_1,P_3,P_4\},\; \{P_2,P_3,P_4\}
   \]
   \end{enumerate}
\end{prop}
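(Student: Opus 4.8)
The plan is to prove each part in turn, leaning heavily on the product formula~\eqref{eq:prod_form} and Matsumoto's relations, and then deducing (ii)--(iv) from (i) by formal manipulations in $K_2(\Q(C))$ modulo torsion.

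For part (i), I would compute the tame symbol $T_P(S_i)$ at every closed point $P \in C^{(1)}$ and check it is trivial. Since $S_i = \{g_{i+1}, g_{i-1}\}$ with $g_j := h_j/h_j(P_j)$, and $\div(g_j) = \div(h_j) = m_j(P_{j+1}) - m_j(P_{j-1})$, the only points where either function has a zero or pole are $P_1, P_2, P_3$. At such a point $P = P_k$, one function is a unit with a specified value and the other has a known order; the tame symbol reduces to the value of the unit function raised to $\pm$ the order of the other, evaluated at $P_k$. The normalization $g_j(P_j) = 1$ is exactly what forces these contributions to be $1$: at each $P_k$, whichever of $g_{i+1}, g_{i-1}$ is a unit there takes the value $1$ (when the subscript equals $k$) or one checks the exponent on the other factor vanishes. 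I would just tabulate the three points $P_1, P_2, P_3$ against the two functions and verify cancellation in each case; this is the kind of routine check done in~\cite[\S4]{ddz}.

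For part (ii), the key identity is that in $K_2(\Q(C))$ one has, up to torsion and up to the Steinberg relations, a well-defined ``triple symbol'' $\{P_1, P_2, P_3\}$ characterized by scaling behaviour under the $m_i$. The argument: consider the functions $h_i$ and observe that $h_i^{\mathrm{lcm}/m_i}$ all have divisor a fixed multiple of the ``cycle'' $(P_{i+1}) - (P_{i-1})$ scaled to $\mathrm{lcm}(m_1,m_2,m_3)$. Using bimultiplicativity, $S_i$ and $\frac{\mathrm{lcm}}{m_i}$ times a single class differ by symbols of the form $\{c, g\}$ with $c \in \Q^\times$ a constant (coming from the normalizing factors $h_j(P_j)$) and $g \in \Q(C)^\times$; since $\Q^\times$-parts land in $K_2(\Q)$, which is torsion, these vanish in $K_2^T(C)/\mathrm{torsion}$. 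Uniqueness follows because $K_2^T(C)/\mathrm{torsion}$ is torsion-free, so an element divisible by the various $\mathrm{lcm}/m_i$ with prescribed images is pinned down. I would make this precise by exhibiting $\{P_1,P_2,P_3\}$ explicitly as a $\Q$-linear combination — in $K_2^T(C)\otimes\Q$ — of the $S_i$ with coefficients $m_i/\mathrm{lcm}$, then checking integrality of the resulting class mod torsion.

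Parts (iii) and (iv) are then formal. For (iii), transposing two points, say $P_1 \leftrightarrow P_2$, swaps the roles of $h_1$ and $h_2$ and replaces each $S_i$ by $\{g_{i-1}, g_{i+1}\} = -\{g_{i+1}, g_{i-1}\} + \{g_{i+1}, g_{i-1}\} + \{g_{i-1}, g_{i+1}\}$; the antisymmetry $\{f,h\} = -\{h,f\}$ in $K_2$ (a standard consequence of Matsumoto, since $\{f,-f\}=0$ and hence $\{f,f\} = \{f,-1\}$, giving $\{f,h\}+\{h,f\} = \{-1, fh\}$, which is $2$-torsion) yields the sign change modulo torsion, and this propagates to $\{P_1,P_2,P_3\}$ via the scaling from (ii). For (iv), given the fourth point $P_4$ with all differences torsion, one has four triple symbols, each a $\Q$-combination of symbols $\{g_{ij}, g_{kl}\}$ built from the six functions $g_{ij}$ with $\div(g_{ij}) = (\text{torsion order})\cdot((P_j)-(P_i))$. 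In $K_2^T(C)\otimes\Q$ these six functions satisfy relations coming from $\sum$ of the cycle classes: e.g. $(P_2)-(P_1) + (P_3)-(P_2) + (P_1)-(P_3) = 0$, so a suitable product of powers of the $g_{ij}$ is constant. Expanding the four triple symbols via bimultiplicativity and eliminating constants (torsion in $K_2$), a dimension count on the span of the $\binom{4}{2}=6$ ``edge functions'' modulo the $\binom{4}{3}=4$ ``face relations'' and the global relation shows the four face classes satisfy one linear dependence. I would phrase this as: the four classes lie in the image of a rank-$\le 3$ subgroup, hence any four of them are dependent over $\Q$, and then clear denominators.

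\textbf{Main obstacle.} The delicate point is part (ii): making rigorous the passage ``work modulo torsion, discard constant symbols'' while genuinely producing a \emph{single} class $\{P_1,P_2,P_3\}$ in $K_2^T(C)/\mathrm{torsion}$ (not merely in $K_2^T(C)\otimes\Q$) with the exact scaling relations for all three indices simultaneously. One must verify that the candidate class, a priori only defined after inverting $\mathrm{lcm}(m_1,m_2,m_3)$, actually has integral image; this uses torsion-freeness of $K_2^T(C)/\mathrm{torsion}$ together with the fact that each $S_i$ individually is a genuine (non-divided) element, so the class is ``as divisible as it needs to be.'' The bookkeeping of which normalizing constants $h_j(P_j)$ appear with which exponents, and confirming they all assemble into elements of $K_2(\Q)$, is the calculation I would be most careful about.
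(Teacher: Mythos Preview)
The paper does not prove this proposition at all; it simply cites~\cite[\S4]{ddz}. So there is no ``paper's approach'' to compare against, and your plan is essentially a reconstruction of the argument given there.

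Your outline is basically sound and follows the standard route, but there is one inaccuracy in part~(i) worth flagging. You write that ``at such a point $P=P_k$, one function is a unit with a specified value and the other has a known order.'' This is true at two of the three points, but not at $P_i$ itself (for the symbol $S_i$): there $g_{i+1}$ has order $-m_{i+1}$ and $g_{i-1}$ has order $m_{i-1}$, so \emph{both} entries have a zero or pole and the tame symbol does not reduce to a simple evaluation. The clean way to handle this point is exactly the product formula~\eqref{eq:prod_form} you invoke in your opening sentence: once you have shown $T_{P_{i-1}}(S_i)=T_{P_{i+1}}(S_i)=1$ and that the tame symbol is trivially~$1$ away from $\{P_1,P_2,P_3\}$, the product formula forces $T_{P_i}(S_i)=1$ as well (all points being $\Q$-rational). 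So your plan works, but the tabulation you describe will not close the case at $P_i$ by itself.

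Two minor remarks. In (iii), the skew-symmetry $\{f,h\}=-\{h,f\}$ holds on the nose in $K_2$ of a field (it follows from $\{a,-a\}=0$), not merely modulo $2$-torsion, so your parenthetical is slightly weaker than needed. In (ii), your identification of the main obstacle is right: what you actually need is the identity $m_1S_1=m_2S_2=m_3S_3$ in $K_2^T(C)/\mathrm{torsion}$, which follows from the fact that a suitable monomial in the $g_j$ is a constant in $\Q^\times$ (since its divisor vanishes), so the discrepancies land in symbols $\{c,\,\cdot\,\}$ with $c\in\Q^\times$, which are torsion. Once that equality is in hand, the common value divided by $\mathrm{lcm}(m_1,m_2,m_3)$ in $K_2^T(C)\otimes\Q$ visibly comes from each $S_i$ divided by an integer, and torsion-freeness pins it down as you say.
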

\begin{proof}
    All assertions are stated and proved in~\cite[\S4]{ddz}.
\end{proof}

\begin{rk}\label{rk:tors}
We stress that there are curves which have no rational torsion points in
their Jacobian, so we cannot expect that the torsion construction will always suffice to
construct enough elements of $K_2(C,\Z)$.
\end{rk}

We will use the following notation:
 \begin{defn} \label{m-contact}
     If $C,\,D\subset \BP^2(K)$ are plane curves defined over a field $K$ and $m$ is a positive integer,
     then we call a 
     point $P \in C(\Ka) \cap D(\Ka)$ an {\em $m$-contact point} between $C$ and $D$ 
if the intersection multiplicity $I_P(C,D)$ of $C$ and $D$ at $P$ is equal to $m$.
If $C$ has degree $d_C$ and $D$ has degree $d_D$, then we call $P \in C(\Ka)$ a {\em maximal
contact point} between $C$ and $D$ if $P$ is a $d_Cd_D$-contact point between $C$ and $D$.
\end{defn}
\begin{rk}
It is possible for a point to be an $m_i$-contact point for several distinct positive integers $m_i$.
\end{rk}
 
\begin{constr}\label{constr:basic}
Now we describe the use of Construction~\ref{constr:torsion} in our setup.
We work in the projective plane $\BP^2$ throughout.
We fix a rational point $\infty\in \BP^2$ and a line $L_\infty$ through this point.
We consider those smooth plane curves $C$ such that $L_\infty$ meets $C$ precisely in
$\infty$.

Then we choose $m\ge 2$ rational points $P_1,\ldots,P_m$ distinct from $\infty$ and require that $C$ meets the
lines $L_{P_i}$ through $P_i$ and $\infty$ precisely in these two points.
By abuse of notation, the defining polynomial of a line $L$ will also be denoted by $L$.
Then each quotient $L_{P_i}/L_\infty$ determines a function $h_i \in k(C)$ with divisor 
\[\div(h_i)= m_i (P_i) - m_i (\infty)  \in Z^1(C)\]
for some positive integer $m_i$.
It follows that for $j \ne i$ the quotients $L_{P_i}/L_{P_j}$ also define torsion
divisors.
Hence all functions obtained in this way are as in Construction~\ref{constr:torsion}, thus we get
elements \[\{\infty,P_i,P_j\} \in K_2^T(C)/\mathrm{torsion}.\] 

In some cases we cannot get as many non-trivial elements
as we want using just one point of maximal contact because of obvious relations, see for
instance~\cite[Example~5.2]{ddz} or the discussion following Lemma~\ref{lemma:quartic_vertical}
below. 
To remedy this, we can require that there is an additional rational point $O\ne \infty$ on $C$ such that there is
a curve $D$ which meets $C$ in precisely this point.
Then, with abuse of notation as above, the quotient $D/L_{\infty}^{\deg(D)}$ 
determines a function $h \in k(C)$ with divisor 
\[\div(h)= \deg(D)\deg(C)\left( (O) -  (\infty)\right)  \in Z^1(C).\]
As above, we find that the divisors $(O) - (P_i)$ are torsion divisors as well.
Therefore, again using Construction~\ref{constr:torsion},  we get further elements
$\{\infty, O, P_i\}  \in K_2^T(C)/\mathrm{torsion}$ and
these might be nontrivial (in this case it suffices to have $m\ge 1$).

Moreover, we can add the requirement that there are rational points $Q_j$ such that the line
through $Q_j$ and $O$ intersects $C$ precisely in these two points, leading to the further
elements $\{\infty, Q_i, Q_j\}$ and $\{\infty, O, Q_j\}$ as before and, finally,
$\{\infty, P_i, Q_j\}$. 
\end{constr}

\begin{rk}\label{rk:prod_sing}
  We can also use plane curves with a singularity of a particular kind.
  Namely, let $C'$ be a plane curve which is smooth outside a rational point $\infty'$ 
  of maximal contact between $C'$ and a line $L_{\infty'}$, with the property that 
  there is a unique point $\infty$ above $\infty'$ in the normalization $C$ of $C'$.
  Then there is a bijection between the closed points on $C$ and those on $C'$.
  It follows that we can work on the singular curve $C'$ as described in
  Construction~\ref{constr:basic} to obtain elements in $K_2^T(C)/\mathrm{torsion}$ as above.
\end{rk}

\section{Hyperelliptic curves}\label{sec:hyp}
The Beilinson conjecture on $K_2$ of 
hyperelliptic curves was studied by Dokchitser, de Jeu and Zagier in~\cite{ddz}. 
They considered several families of hyperelliptic curves $C/\Q$ which possess at least $g(C)$ elements of $K_2(C;\Z)$
using Construction~\ref{constr:torsion}.  
We first recall their approach; then we describe how it can be
viewed in terms of our geometric interpretation.
This leads to a much simpler way of constructing their families (and
others).
Finally, we discuss integrality of the elements obtained in this manner.

Consider a hyperelliptic curve $C/\Q$ of degree $d \in \{2g+1, 2g+2\}$
with a $\Q$-rational Weierstrass point $\infty$ and a point $O \in
C(\Q)$ such that $(O) - (\infty)$ is a $d$-torsion divisor.
Then, if $P$ is another $\Q$-rational Weierstrass point, the divisor
$(P) - (\infty)$ is~2-torsion and we can apply
Construction~\ref{constr:torsion} to find an element $\{\infty, O,
P\} \in K_2^T(C)/\mathrm{torsion}$.

Using Riemann-Roch one can show~\cite[Examples~5.3,~5.6]{ddz} that such a curve $C$ has an affine model
\begin{equation}\label{eq:ddz_model}
  F(x,y) = y^2 + f_1(x)y +x^d = 0,
\end{equation}
such that $\infty$ is the unique point at infinity on $C$ and $O = (0,0)$.
Here
\begin{equation}\label{eq:f1}
    f_1(x) = b_0+b_1x+\ldots+b_gx^g+\delta x^{2g+1} \in \Q[x],\quad 
    \delta = \left\{ \begin{array}{cc} 0, & d=2g+1\\ 2, &
    d=2g+2\end{array} \right.
\end{equation}
and $b_g \ne 0$ if $d=2g+1$; 
moreover, we have $\mathrm{disc}(-4x^d+f_1(x)^2)\ne0$.

The results of~\cite{ddz} rely on the following result, see~\cite[\S6]{ddz}:
\begin{prop}\label{prop:ddz}(Dokchitser-de Jeu-Zagier)
    Let $C$ be the hyperelliptic curve associated to an affine
    equation~\eqref{eq:ddz_model}.
    Let $\alpha\in \Q$ be a root of $t(x) = -x^d+f_1(x)^2/4$ and let
    $P = (\alpha, -f_1(\alpha)/2) \in C(\Q)$. 
    Then $\infty, O $ and $P$ define an element    
    $\{\infty, O, P\}$ of  $ K^T_2(C)/\mathrm{torsion}$.
\end{prop}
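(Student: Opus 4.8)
The plan is to verify that the three points $\infty$, $O$, and $P$ satisfy the hypotheses of Construction~\ref{constr:torsion}, so that the element $\{\infty, O, P\} \in K_2^T(C)/\mathrm{torsion}$ is defined by Proposition~\ref{prop:constr_props}(i)--(ii). Concretely, this means exhibiting functions $h_1, h_2, h_3 \in \Q(C)^*$ realizing the three required principal divisors and checking that the exponents appearing are exactly the orders of the relevant classes in $\Pic^0(C)$. First I would record the divisors of the obvious functions on the model~\eqref{eq:ddz_model}. The function $x$ has divisor $d\,(O) - d\,(\infty)$, since $x$ vanishes to order~$d$ at $\infty$ (from the shape of $F$, with $\infty$ the unique point at infinity) and $x = 0$ meets $C$ only at $O = (0,0)$, again with multiplicity~$d$ because $F(0,y) = y^2$. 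This shows $(O) - (\infty)$ is $d$-torsion, and one checks (as in~\cite{ddz}) that $d$ is exactly its order.

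Next I would treat the point $P = (\alpha, -f_1(\alpha)/2)$. The key computation is that the function $y + f_1(x)/2$, or rather $2y + f_1(x)$, has divisor supported on $\{\infty, P, P'\}$ where $P'$ is the hyperelliptic conjugate of $P$; indeed from $F = 0$ we get $(2y + f_1(x))(2y + f_1(x)) \ne$ — more precisely, $\big(y + f_1(x)/2\big)\big(y + f_1(x)/2\big)$ — let me instead use $y(y + f_1(x)) = -x^d$, so that the function $y$ has divisor related to $O$ and to the locus $y + f_1(x) = 0$. A cleaner route: since $\alpha$ is a root of $t(x) = -x^d + f_1(x)^2/4$, the point $P$ lies on $C$, it is a Weierstrass point (the $y$-coordinate equals $-f_1(\alpha)/2$, the fixed point of the hyperelliptic involution $y \mapsto -y - f_1(x)$ over $x = \alpha$), hence $2(P) \sim 2(\infty)$ and $(P) - (\infty)$ is $2$-torsion; one then checks it is nonzero, so its order is exactly~$2$, and takes $h$ to be the function with divisor $2(P) - 2(\infty)$, namely (a constant multiple of) $x - \alpha$ divided by the appropriate power of the coordinate cutting out $\infty$. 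Finally, $(O) - (P)$ is torsion as a difference of torsion divisors, and I would let $m_3$ be the order of its class, with $h_3$ the corresponding function — its existence is automatic once the class is torsion.

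Having produced $h_1, h_2, h_3$ and the integers $m_1 = d$ (for the pair involving $O,\infty$), $m_2 = 2$ (for $P, \infty$), and $m_3 = \mathrm{ord}\big((O)-(P)\big)$, arranged cyclically as in Construction~\ref{constr:torsion} with $P_1 = \infty$, $P_2 = O$, $P_3 = P$, the hypotheses of that construction are met verbatim. Proposition~\ref{prop:constr_props}(i) then gives that the symbols $S_i$ lie in $K_2^T(C)$, and part~(ii) produces the well-defined element $\{\infty, O, P\} \in K_2^T(C)/\mathrm{torsion}$, which is the assertion. The main obstacle is the bookkeeping around the orders $m_i$: Construction~\ref{constr:torsion} demands that the coefficient in $\div(h_i)$ be \emph{exactly} the order of the class, not merely a multiple, so one must confirm that $(O)-(\infty)$ genuinely has order $d$ (not a proper divisor of $d$) and that $(P)-(\infty)$ is genuinely nontrivial in $\Pic^0(C)$; both follow from the discriminant condition $\mathrm{disc}(-4x^d + f_1(x)^2) \ne 0$, which guarantees $C$ is smooth and forces these points to be genuinely distinct Weierstrass points with the expected torsion orders, exactly as worked out in~\cite[\S6]{ddz}.
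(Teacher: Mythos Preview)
Your approach is essentially the same as the paper's: verify that $P$ is a Weierstrass point, so that $(P)-(\infty)$ is $2$-torsion, combine this with the fact that $(O)-(\infty)$ is $d$-torsion, and invoke Construction~\ref{constr:torsion}. The paper streamlines this by first applying the change of variables $(x,y)\mapsto(x,y-f_1(x)/2)$, which puts $C$ in the form $y^2=t(x)$ and makes the Weierstrass property of $P=(\alpha,-f_1(\alpha)/2)$ transparent: it corresponds to $(\alpha,0)$ on the new model since $t(\alpha)=0$.

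There is, however, a concrete slip in your write-up. You claim $\div(x)=d(O)-d(\infty)$ and justify it by ``$F(0,y)=y^2$''. But $F(0,y)=y^2+b_0y$, so the vertical line $x=0$ meets the affine curve at $(0,0)$ \emph{and} at $(0,-b_0)$; moreover $x$ has a pole of order~$2$ (not $d$) at $\infty$, since the map $x:C\to\BP^1$ has degree~$2$. The function you actually want is $y$: from $y(y+f_1(x))=-x^d$ one sees that $y$ vanishes only at $O$ (because $y=0$ forces $x^d=0$), that $\ord_O(y)=d$ (since $y+f_1\to b_0\ne 0$ at $O$ and $x$ is a uniformizer there), and that the unique pole of $y$ is at $\infty$ with order $d$. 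Hence $\div(y)=d(O)-d(\infty)$, which is what feeds into Construction~\ref{constr:torsion}. With this correction your argument goes through, and the remaining steps --- identifying $P$ as a fixed point of the hyperelliptic involution $y\mapsto -y-f_1(x)$ and taking $x-\alpha$ as the function with divisor $2(P)-2(\infty)$ --- are fine.
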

\begin{proof}
Via the coordinate change  $(x,y) \mapsto (x,y-f_1(x)/2)$, the curve $C$ is isomorphic
to the hyperelliptic curve with affine equation
\begin{equation}\label{eq:hyperelliptic_model}
    y^2 -t(x) = 0,
\end{equation}
where 
\[
  t(x) = -x^d+f_1(x)^2/4.
\]
On this model, the affine
Weierstrass points are of the form $(\alpha, 0)$, where $t(\alpha) =
0$.
So if $\alpha \in \Q$ is a root of $t$, then $P = (\alpha,
-f_1(\alpha)/2) \in C(\Q)$ is a rational Weierstrass point and the result follows from
the discussion preceding the theorem.
\end{proof}
\begin{rk}\label{rk:factors}
In~\cite[\S7,10]{ddz} suitable polynomials were constructed using clever
manipulation of polynomials and brute force computer searches.
More precisely, Dokchitser-de Jeu-Zagier show that every $\mathrm{Gal}(\Qa/\Q)$-orbit $\{P_\sigma\}_{\sigma}$ of
Weierstrass points, and therefore every $\mathrm{Gal}(\Qa/\Q)$-orbit
$\{\alpha_\sigma\}_{\sigma}$ of roots of $t(x)$, defines an element of
$K^T_2(C)/\mathrm{torsion}$.
Hence the irreducible factors of $t(x)$ determine the number of
elements in $K^T_2(C)/\mathrm{torsion}$ one can obtain in this way. 
The crucial, and most difficult, step is therefore the construction of polynomials
$f_1(x)$ as above such that $t(x)$ has many rational factors.  
\end{rk}

The results from~\cite{ddz} as stated in Proposition~\ref{prop:ddz} fit into
Construction~\ref{constr:basic}, refined according to Remark~\ref{rk:prod_sing}:
Because a hyperelliptic curve $C$ of genus at least $2$ cannot be embedded smoothly into $\BP^2$, we 
consider the projective closure $C'$ in $\BP^2$ of the affine curve given
by~\eqref{eq:ddz_model}.
The curve $C'$ has a unique point $\infty'$ at infinity; since this is the only singular
point on $C'$, we can identify affine points on $C$ with their images on $C'$ under the
normalization morphism $C \to C'$ and Remark~\ref{rk:prod_sing} applies.
Then the points $\infty'$ and $O = (0,0)\in C'(\Q)$ are points of maximal contact 
for the line $L_{\infty'}$ at infinity and the tangent line $L_O:y=0$, respectively, in the sense of Definition~\ref{m-contact}.
An affine Weierstrass point $P$ as in Theorem~\ref{prop:ddz}, viewed as a point on $C'$,
has the property that
the tangent line $L_P$ to $C'$ at $P$ only intersects $C'$ in $P$ and $\infty'$.
The respective intersection multiplicities are  $I_P(L_P, C') =2$ and $I_{\infty'}(L_P, C') = d-2$.
Therefore we recover the elements from Proposition~\ref{prop:ddz} by applying
Construction~\ref{constr:basic}, see Remark~\ref{rk:prod_sing}.
See Figure~\ref{fig:hyp} on page~\pageref{fig:hyp} for an example of this geometric
configuration.

Conversely, we now consider a curve $C'$ given as the projective closure of an affine curve given
by an equation~\eqref{eq:ddz_model} and a vertical line $L_\alpha$ given by $x-\alpha$,
where $\alpha \in \Q$.
This line intersects $C'$ in $\infty'$ with multiplicity $d-2$.
If it intersects $C'$ in another rational point $P$ with multiplicity~2 (in other
words, if $L_\alpha$ is tangent to $C'$ in $P$), then we
are in the situation of Construction~\ref{constr:basic}, see Remark~\ref{rk:prod_sing}.

\begin{lemma}\label{lemma:hyp_ex}
    The vertical line $L_\alpha$ is a tangent line to $C'$ in the point $P=
    (\alpha, \beta)\in C'(\Q)$ if and only if 
    \begin{equation}\label{eq:key}
      \beta^2 = \alpha^d\;\;\textrm{and}\;\; f_1(\alpha) = -2\beta.
    \end{equation}
\end{lemma}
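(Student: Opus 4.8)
The plan is to work directly with the defining equation $F(x,y) = y^2 + f_1(x)y + x^d = 0$ of the affine curve and analyze when the vertical line $x = \alpha$ is tangent to $C'$ at a rational affine point $P = (\alpha,\beta)$. First I would note that $P = (\alpha,\beta)$ lies on $C'$ precisely when $F(\alpha,\beta) = \beta^2 + f_1(\alpha)\beta + \alpha^d = 0$. The line $L_\alpha: x - \alpha = 0$ passes through $P$ automatically, so the only extra condition is that the intersection multiplicity $I_P(L_\alpha, C')$ be at least $2$, i.e. that $L_\alpha$ be tangent to $C'$ at $P$.

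To compute this intersection multiplicity I would substitute $x = \alpha$ into $F$ and look at the resulting polynomial in $y$, namely $g(y) := F(\alpha,y) = y^2 + f_1(\alpha)y + \alpha^d$. The intersection multiplicity $I_P(L_\alpha, C')$ equals the order of vanishing of $g(y)$ at $y = \beta$. Since $g$ is a quadratic polynomial in $y$ and $C'$ is smooth at $P$ (we are in the situation where $P$ is an affine smooth point, the discriminant condition $\mathrm{disc}(-4x^d + f_1(x)^2) \neq 0$ guaranteeing smoothness of the affine curve), tangency at $P$ is equivalent to $\beta$ being a double root of $g(y)$. A monic quadratic $y^2 + f_1(\alpha) y + \alpha^d$ has $\beta$ as a double root if and only if $g(\beta) = 0$ and $g'(\beta) = 2\beta + f_1(\alpha) = 0$; the second equation gives $f_1(\alpha) = -2\beta$, and substituting back into $g(\beta) = 0$ yields $\beta^2 - 2\beta^2 + \alpha^d = 0$, i.e. $\beta^2 = \alpha^d$. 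Conversely, these two equations clearly force $\beta$ to be the double root $-f_1(\alpha)/2$ of $g$, so $I_P(L_\alpha,C') = 2$ there.

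Thus the equivalence I want is exactly the pair of conditions $\beta^2 = \alpha^d$ and $f_1(\alpha) = -2\beta$. The only genuinely delicate point is bookkeeping about where $P$ lies: the statement concerns $P \in C'(\Q)$, which I should take to mean an affine rational point (the point at infinity $\infty'$ is handled separately and has $I_{\infty'}(L_\alpha, C') = d - 2$, as already observed in the text preceding the lemma). One should also observe that $P = (\alpha,\beta)$ with $\beta^2 = \alpha^d$ and $f_1(\alpha) = -2\beta$ is automatically a smooth point of the affine curve: $\partial F/\partial y = 2y + f_1(x)$ vanishes at $P$ by the second condition, but $\partial F/\partial x = f_1'(x)y + d x^{d-1}$ cannot also vanish there without contradicting $\mathrm{disc}(-4x^d + f_1(x)^2) \neq 0$ — indeed after the coordinate change $(x,y)\mapsto(x,y - f_1(x)/2)$ of Proposition~\ref{prop:ddz}, $P$ becomes the Weierstrass point $(\alpha, 0)$ with $t(\alpha) = 0$, and a singular point would force $t$ to have a repeated root. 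This makes the computation of the intersection multiplicity via the restricted polynomial $g(y)$ legitimate, and it squares the lemma with the discussion of Proposition~\ref{prop:ddz}. I expect the main obstacle to be purely expository — namely stating cleanly that the only constraint beyond $P \in C'$ is the vanishing of $g'(\beta)$ — rather than any real mathematical difficulty, since everything reduces to the elementary fact about double roots of a monic quadratic.
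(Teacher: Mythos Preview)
Your proposal is correct and follows essentially the same approach as the paper: both reduce tangency of $L_\alpha$ at $P=(\alpha,\beta)$ to the condition that $\beta$ be a double root of the monic quadratic $F(\alpha,y)=y^2+f_1(\alpha)y+\alpha^d$, i.e.\ that $F(\alpha,y)=(y-\beta)^2$, and then read off the two conditions~\eqref{eq:key} by comparing coefficients (the paper) or equivalently by setting $g(\beta)=g'(\beta)=0$ (your version). Your additional remarks on smoothness of $P$ and the link to Weierstrass points are correct but go beyond what the paper's proof records.
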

\begin{proof}
  It is easy to see that~\eqref{eq:key} 
is equivalent to 
\begin{equation}\label{eq:hyp_cond}
    F(\alpha, y) = (y-\beta)^2.
\end{equation}
The line $L_\alpha$ intersects $C'$ only in $P$ and $\infty'$ if and only
if~\eqref{eq:hyp_cond} holds,
because the zeros of the left hand side are precisely the $y$-coordinates of the affine
intersection points between $C'$ and $L_{\alpha}$. 
\end{proof}

\begin{rk}\label{rk:connection}
  A point $P=(\alpha,\beta)$ with tangent line $L_\alpha$ as in Lemma~\ref{lemma:hyp_ex} is a rational affine Weierstrass point on the
  normalization $C$ of $C'$ and hence corresponds to a point $P$ as in
  Proposition~\ref{prop:ddz}.
\end{rk}

We can use Lemma~\ref{lemma:hyp_ex} as a recipe for~\emph{constructing} examples of
hyperelliptic curves with a number of given representatives of elements of
$K^T_2(C)/\mathrm{torsion}$.
If we fix $x_i$ and $y_i \in\Q$ such that $y_i^2=x_i^d$ for $i = 1,\ldots,m$ and treat the 
coefficients $b_0,\ldots,b_g$ of $f_1$ as indeterminates,
then the crucial condition~\eqref{eq:key} $f_1(x_i) = -2y_i$ translates into a system of $m$ linear equations. 
Generically, the set of solutions to this system is parametrized by $g+1 - m$ parameters.
Therefore the maximal number of representatives of elements of $K_2^T(C)/\mathrm{torsion}$ that we can
construct generically using this approach is $g+1$. 
This leads to a unique solution of the system of linear equations, hence to a $g+1$-parameter family of examples.

We first carry this out for the case $d=2g+1$.
\begin{lemma}\label{lem:hyp_ex_odd}
Let $g \ge 1$, $a_1,\ldots,a_{g+1} \in \Q^\times$ such that
    $a_1^2,\ldots,a_{g+1}^2$ are pairwise distinct.
    Let $V \in \mathrm{GL}_{g+1}(\Q)$ denote the
    Vandermonde matrix $\left(a_i^{2j}\right)_{\substack{1\le i \le g+1\\ 0 \le j \le g}}$
    and let $w = \left(-2a^{2g+1}_i\right)_{1\le i \le g+1} \in \Q^{g+1}$.
    Define 
    \[
        b = (b_0,\ldots,b_g) = V^{-1}\cdot w  \in \Q^{g+1}
    \]   
    and let 
    \[
        f_1(x) = b_0+b_1x+\ldots + b_gx^g.
    \]
    Then we have
\[
  f_1(a_i^2)=-2a_i^{2g+1},\quad i \in \{1,\ldots,g+1\}.
\]
\end{lemma}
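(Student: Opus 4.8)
The plan is straightforward, since the statement is essentially the assertion that one can interpolate a polynomial of degree $\le g$ through $g+1$ points with distinct $x$-coordinates. First I would note that the matrix $V = \left(a_i^{2j}\right)_{1\le i\le g+1,\; 0\le j\le g}$ is precisely the Vandermonde matrix attached to the $g+1$ scalars $a_1^2,\ldots,a_{g+1}^2$ (rows indexed by the evaluation points $a_i^2$, columns by the exponents $j$), so that its determinant equals $\prod_{1\le i<k\le g+1}\left(a_k^2-a_i^2\right)$. By hypothesis the $a_i^2$ are pairwise distinct, hence this product is nonzero and $V \in \mathrm{GL}_{g+1}(\Q)$, which is what makes the definition $b = V^{-1}\cdot w$ meaningful and gives $V\cdot b = w$.

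The remaining step is simply to read off what the identity $V\cdot b = w$ says coordinate by coordinate. Writing $b = (b_0,\ldots,b_g)$ as a column vector, the $i$-th entry of $V\cdot b$ is $\sum_{j=0}^{g} a_i^{2j}\,b_j$, which is exactly the value $f_1(a_i^2)$ of the polynomial $f_1(x) = b_0 + b_1 x + \cdots + b_g x^g$ evaluated at $x = a_i^2$. On the other hand, the $i$-th entry of $w$ is $-2a_i^{2g+1}$ by definition. Equating these two for $i \in \{1,\ldots,g+1\}$ yields $f_1(a_i^2) = -2a_i^{2g+1}$, which is the claim.

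There is no real obstacle here; the only point requiring a little care is the bookkeeping of the indices of the Vandermonde matrix, so that the matrix product $V\cdot b$ genuinely reproduces the evaluations of $f_1$ rather than, say, evaluations of the ``reversed'' polynomial. I would therefore spell out the identification of $(V\cdot b)_i$ with $f_1(a_i^2)$ explicitly, since conventions for Vandermonde matrices differ. For the later application it is worth recording that, with $x_i := a_i^2$ and $y_i := a_i^{2g+1}$, one has $y_i^2 = x_i^{2g+1} = x_i^d$ and $f_1(x_i) = -2y_i$, so that condition~\eqref{eq:key} of Lemma~\ref{lemma:hyp_ex} is satisfied and the resulting curve carries the desired representatives; but this is a consequence of the lemma rather than part of its proof.
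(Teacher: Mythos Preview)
Your proof is correct and follows exactly the approach of the paper, which simply invokes ``classical interpolation properties of the Vandermonde matrix''; you have merely spelled out what this means. There is nothing to add.
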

\begin{proof}
  This is a consequence of classical interpolation properties of the
  Vandermonde matrix.
\end{proof}
\begin{prop}\label{prop:hyp_ex_odd}
  Let $C$ denote the hyperelliptic curve associated to the affine model
    \[
        y^2+f_1(x)y+x^{2g+1} =0,
    \]    
    where $f_1$ is as in Lemma~\ref{lem:hyp_ex_odd}.
    Then, for every $i\in\{1,\ldots,g+1\}$, we get an element $\{\infty, O, P_i\}\in
    K_2^T(C)/\mathrm{torsion}$, where $P_i = (a^2_i,a_i^{2g+1}) \in C(\Q)$.
\end{prop}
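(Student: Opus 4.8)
The plan is to show that $C$, together with the points $\infty$, $O=(0,0)$ and $P_i=(a_i^2,a_i^{2g+1})$, falls into the situation of Construction~\ref{constr:basic} refined as in Remark~\ref{rk:prod_sing} (equivalently, into the situation of Proposition~\ref{prop:ddz}), after which the statement is immediate. So the real work is just checking incidences and multiplicities.

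First I would fix the plane model: let $C'\subset\BP^2$ be the projective closure of the affine curve $y^2+f_1(x)y+x^{2g+1}=0$. Since $g\ge 1$, the term $x^{2g+1}$ has strictly largest degree, so $C'$ has degree $2g+1$, has the single point $\infty'=[0:1:0]$ at infinity, and the line $L_{\infty'}$ at infinity meets $C'$ only there with $I_{\infty'}(L_{\infty'},C')=2g+1$; thus $\infty'$ is a maximal-contact point in the sense of Definition~\ref{m-contact}, and it has a unique preimage $\infty$ in the normalization $C$ (the Weierstrass point at infinity of an odd-degree hyperelliptic curve), so Remark~\ref{rk:prod_sing} applies and we may argue on $C'$. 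Next, $O=(0,0)$ lies on $C'$ and $F(x,0)=x^{2g+1}$, so the line $L_O\colon y=0$ meets $C'$ only in $O$; hence $O$ is a maximal-contact point for $L_O$, giving a function with divisor $(2g+1)\bigl((O)-(\infty)\bigr)$ as in Construction~\ref{constr:basic}. For each $i$, substituting $x=a_i^2$, $y=a_i^{2g+1}$ into $F$ and using $f_1(a_i^2)=-2a_i^{2g+1}$ from Lemma~\ref{lem:hyp_ex_odd} shows $P_i\in C'(\Q)$, and $P_i$ is distinct from $O$ and from $\infty$ because $a_i\ne 0$. Applying Lemma~\ref{lemma:hyp_ex} with $\alpha=a_i^2$ and $\beta=a_i^{2g+1}$: condition~\eqref{eq:key} reads $a_i^{4g+2}=(a_i^2)^{2g+1}$, which holds, together with $f_1(a_i^2)=-2a_i^{2g+1}$, which holds by Lemma~\ref{lem:hyp_ex_odd}. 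Hence the vertical line $L_{a_i^2}\colon x=a_i^2$ is tangent to $C'$ at $P_i$ and meets $C'$ only in $P_i$ (with multiplicity $2$) and $\infty'$ (with multiplicity $2g-1$). Since $L_{a_i^2}$ passes through $\infty'=[0:1:0]$, it plays exactly the role of the line $L_{P_i}$ through $P_i$ and $\infty$ in Construction~\ref{constr:basic}, and together with $D=L_O$ this puts us in the setting of that construction (see Remark~\ref{rk:prod_sing}), so Construction~\ref{constr:torsion} yields $\{\infty,O,P_i\}\in K_2^T(C)/\mathrm{torsion}$. Equivalently, $\alpha=a_i^2$ is a root of $t(x)=-x^{2g+1}+f_1(x)^2/4$ with $-f_1(\alpha)/2=a_i^{2g+1}$, so this is Proposition~\ref{prop:ddz}.

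The one point that is not formal is that $C$ must genuinely be a smooth hyperelliptic curve of genus $g$, i.e. that $t(x)=-x^{2g+1}+f_1(x)^2/4$ be squarefree (equivalently $\disc(-4x^{2g+1}+f_1(x)^2)\ne 0$, which in particular forces $b_0=f_1(0)\ne 0$, as used above to identify the tangent line at $O$ and make $O$ a smooth point). This is precisely the genericity condition already built into~\eqref{eq:f1}; it fails for only finitely many choices of $(a_1,\dots,a_{g+1})$ with the $a_i^2$ pairwise distinct, and I would either record it as a standing hypothesis or note that the construction produces a family whose generic member is smooth. I expect this smoothness/genericity verification to be the only genuine obstacle; everything else reduces to the one-line substitution above and routine bookkeeping with intersection multiplicities.
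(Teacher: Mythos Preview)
Your proposal is correct and follows essentially the same route as the paper: verify condition~\eqref{eq:key} for $(\alpha,\beta)=(a_i^2,a_i^{2g+1})$ using Lemma~\ref{lem:hyp_ex_odd}, then invoke Lemma~\ref{lemma:hyp_ex} (equivalently Proposition~\ref{prop:ddz}) to conclude. The paper's proof is the one-line ``follows immediately from Lemma~\ref{lem:hyp_ex_odd} using Lemma~\ref{lemma:hyp_ex}''; you have simply unpacked the surrounding discussion of $\infty'$, $O$, and the vertical tangents that the paper already established before stating the proposition. One small inaccuracy: the smoothness condition $\disc(t)\ne 0$ cuts out a hypersurface in the parameters $(a_1,\dots,a_{g+1})$, not a finite set, so ``fails for only finitely many choices'' is not right; your alternative phrasing (generic member smooth, or standing hypothesis from~\eqref{eq:f1}) is the correct way to say it.
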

\begin{proof}
The result follows immediately from Lemma~\ref{lem:hyp_ex_odd} using Lemma~\ref{lemma:hyp_ex}.
\end{proof}
Note that curves $C$ as in Proposition~\ref{prop:hyp_ex_odd} provide
examples of the construction in~\cite{ddz} such that the two-torsion-polynomial $t(x)$ has at least
$g+2$ rational factors.
This seems to be the easiest and most natural approach when $d=2g+1$.

Let us write down the family we get from applying Proposition~\ref{prop:hyp_ex_odd} for
$g=2$.
This recovers~\cite[Example~7.3]{ddz}, see also~\cite[Remark~7.4]{ddz}.
\begin{ex}\label{ex:g2m3}
Let $a_1,\,a_2,\,a_3 \in \Q^\times$ such that $a_1^2, a_2^2, a_3^2$
are pairwise distinct and set
\begin{align*}
    \gamma =&\, (a_1+a_2)(a_1+a_3)(a_2+a_3)\\
  b_0 =& -\frac{2}{\gamma}(a_1a_2+a_1a_3+a_2a_3)a_3^2a_2^2a_1^2 \\
  b_1 =&\, \frac{2}{\gamma}(a_1^3a_2^3+a_1^3a_2^2a_3+a_1^3a_2a_3^2+a_1^3a_3^3+a_1^2a_2^3a_3+a_1
^2a_2^2a_3^2+a_1^2a_2a_3^3+a_1a_2^3a_3^2+a_1a_2^2a_3^3+a_2^3a_3^3)\\
b_2 =& -\frac{2}{\gamma}(a_1^3a_2+a_1^3a_3+a_1^2a_2^2+2a_1^2a_2a_3+a_1^
2a_3^2+a_1a_2^3+2a_1a_2^2a_3+2a_1a_2a_3^2+a_1a_3^3+a_2^3a_3
\\&+a_2^2a_3^2+a_2a_3^3).
\end{align*}
By Proposition~\ref{prop:hyp_ex_odd}, the genus~2 curve
\[
    C:y^2 + (b_0+b_1x+b_2x^2)y + x^5 = 0
\]
satisfies $\{\infty, O, P_i\} \in K_2^T(C)/\mathrm{torsion}$ for $i = 1,2,3$, where $P_i =
(a_i^2, a_i^5) \in C(\Q)$.
See Figure~\ref{fig:hyp} on page~\pageref{fig:hyp} for the example $a_1=1,a_2 = 1/2, a_3 = 1/4$.
\end{ex}

\begin{figure}
  \begin{center}
    \scalebox{.6}{\includegraphics[width=\textwidth]{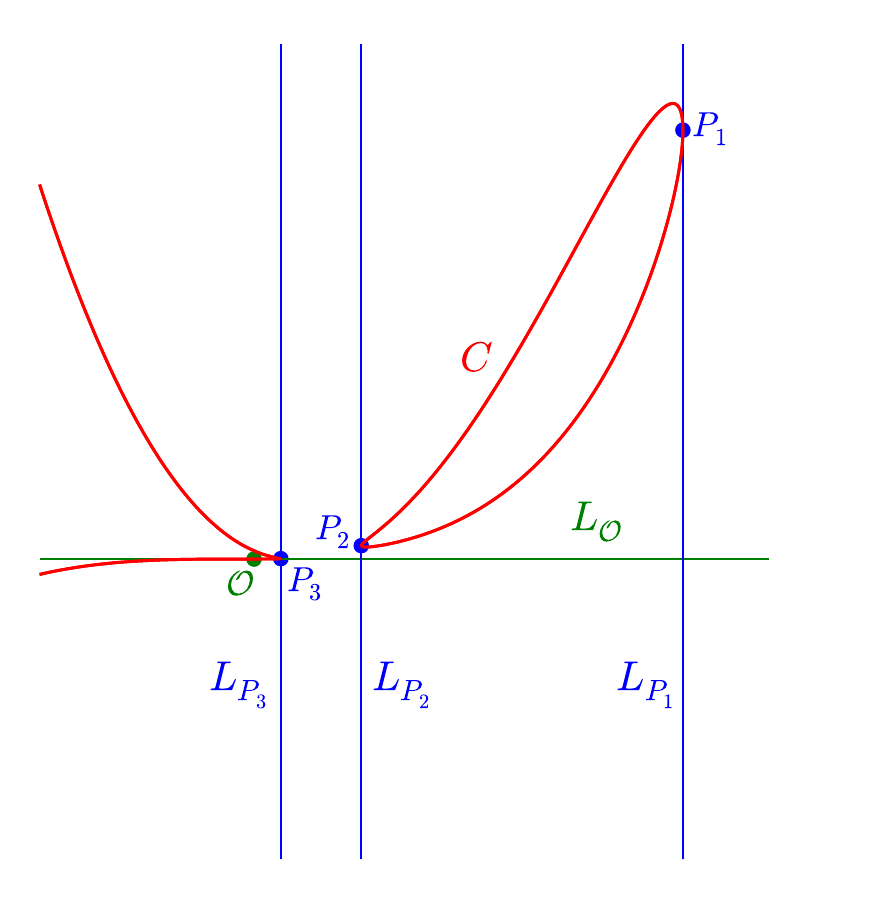}}
    \caption{The curve $C$ from Example~\ref{ex:g2m3} with $a_1=1,a_2 = 1/2, a_3 = 1/4$}
  \label{fig:hyp}
  \end{center}
\end{figure}

We could also use Lemma~\ref{lemma:hyp_ex} to find examples for $d=2g+2, m=g+1$ in a
completely analogous way.
However, here we have more flexibility. Namely,  when
$d=2g+1$ is odd, then we have
to make sure that the $x$-coordinate of the points $P=(\alpha, \beta)$ are rational squares in order
to satisfy~\eqref{eq:key}.
In contrast, when $d=2g+2$ is even,  then $P$ needs to satisfy 
\[
  \beta = \pm \alpha^{g+1},
\]
without any a priori conditions on $\alpha$.
\begin{lemma}\label{lem:hyp_ex_even}
    Let $g \ge 1$ and let $a_1,\ldots,a_{g+1} \in \Q^\times$ be pairwise distinct.
    Let $V \in \mathrm{GL}_{g+1}(\Q)$ denote the
    Vandermonde matrix $\left(a_i^j\right)_{\substack{1\le i \le g+1\\ 0 \le j \le g}}$
    and let $w = \left(-2a^{g+1}_i- 2\epsilon_i a^{g+1}_i\right)_{1\le i \le g+1} \in
    \Q^{g+1}$, where $\epsilon_1,\ldots,\epsilon_{g+1} \in \{\pm1\}$.
    Define 
    \[
        b = (b_0,\ldots,b_g) = V^{-1}\cdot w  \in \Q^{g+1}
    \]   
    and let 
    \[
        f_1(x) = b_0+b_1x+\ldots + b_gx^g+2x^{g+1}. 
    \]
    Then we have 
  \[
    f_1(a_i)=-2\epsilon_ia_i^{g+1},\quad i \in \{1,\ldots,g+1\}.
  \]
\end{lemma}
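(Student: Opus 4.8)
The plan is to reduce the claim to the standard fact that the Vandermonde matrix $V = \left(a_i^j\right)_{\substack{1 \le i \le g+1\\ 0 \le j \le g}}$ is invertible precisely because the $a_i$ are pairwise distinct, so that the linear system $V b = w$ has the unique solution $b = (b_0,\ldots,b_g)$ defined in the statement. Unwinding the matrix equation $V b = w$ row by row, the $i$-th row reads $\sum_{j=0}^{g} b_j a_i^j = w_i$; in other words the polynomial $p(x) = b_0 + b_1 x + \cdots + b_g x^g$ of degree at most $g$ is the Lagrange interpolation polynomial through the $g+1$ nodes $a_i$ with prescribed values $p(a_i) = w_i$ for every $i \in \{1,\ldots,g+1\}$. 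This is exactly the interpolation property of the Vandermonde matrix already invoked in Lemma~\ref{lem:hyp_ex_odd}.

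Next I would substitute the given value $w_i = -2a_i^{g+1} - 2\epsilon_i a_i^{g+1}$ and use the relation $f_1(x) = p(x) + 2x^{g+1}$ to compute
\[
  f_1(a_i) = p(a_i) + 2a_i^{g+1} = \left(-2a_i^{g+1} - 2\epsilon_i a_i^{g+1}\right) + 2a_i^{g+1} = -2\epsilon_i a_i^{g+1},
\]
which is precisely the asserted identity. Note that the role of the extra monomial $2x^{g+1}$ in $f_1$ is exactly to absorb the $-2a_i^{g+1}$ summand coming from $\delta = 2$ in~\eqref{eq:f1} (the case $d = 2g+2$ of~\eqref{eq:ddz_model}), so that the interpolation only has to account for the sign-dependent part $-2\epsilon_i a_i^{g+1}$. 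This is the even-degree counterpart of the bookkeeping done for $d = 2g+1$ in Lemma~\ref{lem:hyp_ex_odd}.

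I do not expect a genuine obstacle here: the only points requiring (minimal) care are to match conventions between the displayed definition of $f_1$ and the model~\eqref{eq:ddz_model}--\eqref{eq:f1}, and to observe that, in contrast with the odd case where one needed $a_1^2,\ldots,a_{g+1}^2$ pairwise distinct, here the interpolation works verbatim under the weaker hypothesis that $a_1,\ldots,a_{g+1}$ themselves are pairwise distinct; the requirement $a_i \in \Q^\times$ is not needed for the interpolation step itself but is kept for the subsequent geometric construction (to ensure the resulting points $P_i$ and the model stay non-degenerate). Accordingly the actual write-up can be as short as that of Lemma~\ref{lem:hyp_ex_odd}: cite the interpolation property of the Vandermonde matrix and then perform the one-line substitution above.
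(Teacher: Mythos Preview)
Your argument is correct and is precisely the approach the paper takes: the paper's proof is a single sentence invoking the classical interpolation property of the Vandermonde matrix, and your write-up simply unpacks that sentence by exhibiting $p(x)=b_0+\cdots+b_gx^g$ as the interpolating polynomial with $p(a_i)=w_i$ and then computing $f_1(a_i)=p(a_i)+2a_i^{g+1}=-2\epsilon_i a_i^{g+1}$.
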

\begin{proof}
  As in Lemma~\ref{lem:hyp_ex_even}, this follows from classical interpolation properties of the
  Vandermonde matrix.
\end{proof}
\begin{prop}\label{prop:hyp_ex_even}
  Let $C$ denote the hyperelliptic curve associated to the affine model
    \[
        y^2+f_1(x)y+x^{2g+1} =0,
    \]    
    where $f_1$ is as in Lemma~\ref{lem:hyp_ex_even} and $\epsilon_1,\ldots,\epsilon_{g+1} \in \{\pm1\}$ are not all equal to~$-1$.
    Then, for every $i\in\{1,\ldots,g+1\}$, we get an element $\{\infty, O, P_i\}\in
    K_2^T(C)/\mathrm{torsion}$, where $P_i = (a_i,\epsilon_ia_i^{g+1}) \in C(\Q)$.
\end{prop}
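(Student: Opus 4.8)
The plan is to reduce the statement to Lemma~\ref{lem:hyp_ex_even} together with Lemma~\ref{lemma:hyp_ex}, in exactly the way the proof of Proposition~\ref{prop:hyp_ex_odd} reduces to Lemma~\ref{lem:hyp_ex_odd} and Lemma~\ref{lemma:hyp_ex}. The only extra ingredients are the observation that the hypothesis on the $\epsilon_i$ is what prevents the model from degenerating, and the (harmless) remark that the signs $\epsilon_i$ may be chosen independently of one another.

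Concretely, I would argue as follows. By construction $f_1$ is the polynomial furnished by Lemma~\ref{lem:hyp_ex_even}, so that $f_1(a_i) = -2\epsilon_i a_i^{g+1}$ for every $i$, and we are in the even case $d = 2g+2$. Put $\beta_i := \epsilon_i a_i^{g+1} \in \Q$. Since $\epsilon_i^2 = 1$ we get $\beta_i^2 = a_i^{2g+2} = a_i^{d}$ and $f_1(a_i) = -2\epsilon_i a_i^{g+1} = -2\beta_i$, so the pair $(\alpha,\beta) = (a_i,\beta_i)$ satisfies~\eqref{eq:key}. By Lemma~\ref{lemma:hyp_ex} the vertical line $L_{a_i}$ is therefore tangent to the projective closure $C'$ of the affine curve at $P_i = (a_i,\beta_i)$ and meets $C'$ only in $P_i$ and in $\infty'$, with intersection multiplicities $2$ and $2g$ respectively. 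By Remark~\ref{rk:connection} the point $P_i$ is a rational affine Weierstrass point on the normalization $C$ of $C'$, so applying Construction~\ref{constr:basic} --- in the form of Remark~\ref{rk:prod_sing}, exactly as in the derivation of Proposition~\ref{prop:ddz} --- to the three points $\infty$, $O$ and $P_i$ yields the element $\{\infty, O, P_i\} \in K_2^T(C)/\mathrm{torsion}$.

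It remains to address two points. First, the hypothesis that the $\epsilon_i$ are not all equal to $-1$ is precisely what keeps $C$ a genuine curve: the $i$-th component of $w$ is $-2(1+\epsilon_i)a_i^{g+1}$, so $w = 0$ --- whence $b = V^{-1}w = 0$ and $f_1(x) = 2x^{g+1}$ --- exactly when every $\epsilon_i = -1$, and then the affine equation collapses to $(y + x^{g+1})^2 = 0$; once $w \neq 0$ the curve $C$ is, for generic $a_i$ as always in these constructions, a smooth projective hyperelliptic curve of genus $g$ with $\infty$ as its unique point at infinity, so that Proposition~\ref{prop:ddz} applies verbatim. Second, I do not expect any real obstacle, since the argument is a direct application of the two lemmas and the interpolation input of Lemma~\ref{lem:hyp_ex_even} is classical; the only thing worth flagging --- and the point isolated in Remark~\ref{rk:mixed} --- is that \emph{mixed} sign vectors $(\epsilon_1,\ldots,\epsilon_{g+1})$ are admissible, because for even $d = 2g+2$ the Weierstrass condition $\beta^2 = \alpha^d$ imposes nothing on $\alpha$ beyond $\beta = \pm\alpha^{g+1}$, in contrast with the odd case where $\alpha$ had to be a rational square, and this extra freedom is already built into the ``$\pm$'' in~\eqref{eq:key}.
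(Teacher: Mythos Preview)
Your proof is correct and follows exactly the same route as the paper's: reduce to Lemma~\ref{lem:hyp_ex_even} to get $f_1(a_i)=-2\epsilon_ia_i^{g+1}$, then invoke Lemma~\ref{lemma:hyp_ex} (via~\eqref{eq:key}) and Construction~\ref{constr:basic}/Remark~\ref{rk:prod_sing}. Your treatment of the excluded case is in fact more precise than the paper's --- the paper says ``otherwise $f_1$ is the zero polynomial'', whereas you correctly note that $f_1=2x^{g+1}$ and the affine equation degenerates to $(y+x^{g+1})^2=0$.
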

\begin{proof}
This follows from Lemma~\ref{lem:hyp_ex_even} using Lemma~\ref{lemma:hyp_ex}.
We need that not all $\epsilon_i$ are equal to $-1$, as otherwise
$f_1$ is the zero polynomial.
\end{proof}
\begin{rk}\label{rk:univ_reln}
By~\cite[Proposition~6.14]{ddz} there is a universal relation between the
classes of the elements from Proposition~\ref{prop:hyp_ex_even} in $K_2^T(C)/\mathrm{torsion}$.
\end{rk}
Curves $C$ as in Proposition~\ref{prop:hyp_ex_even} provide
examples of the construction in~\cite{ddz} such that the two-torsion-polynomial $t(x)$ has at least
$g+2$ rational factors.
In the language of~\cite{ddz}, the indices $i$ such that $\epsilon_i=-1$ correspond to
prescribed rational roots of the first factor in the factorisation 
\begin{equation}\label{eq:t-split}
  t(x) = \frac{f_1(x)^2}{4} -x^{2g+2} = \left(\frac{f_1(x)}{2} - x^{g+1}\right)\left(\frac{f_1(x)}{2} +
  x^{g+1}\right)
\end{equation}
of $t(x)$, whereas the indices $i$ such that $\epsilon_i=1$ correspond to
prescribed rational roots of the second factor.
From this point of view it is clear that we cannot prescribe $g+1$ points $P_i$ having
$\epsilon_i=-1$, because the first factor $\frac{f_1(x)}{2} - x^{g+1}$ only has degree $g$.

Proposition~\ref{prop:hyp_ex_even} with $\epsilon_i = 1$ for $1\le i \le g+1$ recovers the ``generic'' case
of~\cite[Example~7.9]{ddz}. Moreover,~\cite[Example~10.5]{ddz} is a special case
of Proposition~\ref{prop:hyp_ex_even} when $g=2$.

As Example~\ref{ex:g2m3} suggests, the curves we obtain 
using Propositions~\ref{prop:hyp_ex_odd} and~\ref{prop:hyp_ex_even} 
may have rather unwieldy coefficients, which complicates, for instance, numerically verifying
Conjecture~\ref{conj:beilinson}. 
Instead, we can prescribe $m \le g$ vertical tangents at rational points using
Lemma~\ref{lemma:hyp_ex}.
Soling the resulting system of linear equations, the
parametrized set of solutions may yield curves whose coefficients are relatively small and
for which we know at least $m$ representatives of elements of $K^T_2(C)/\mathrm{torsion}$.
In light of Conjecture~\ref{conj:beilinson}, this is especially interesting when $m=g$.
We do not go into detail, but note that taking $g=m=2,\, d=5$ recovers
\cite[Example~7.3]{ddz} and hence \cite[Example~10.1]{ddz}. 
Taking $g=3,\,d=6, m=2, a_1 = 1 $ and $a_2 = 1/2$ recovers~\cite[Example~10.6]{ddz}.  
Furthermore, taking $g$ arbitrary, $d=2g+2$, $m=g$ and $\epsilon_i =-1$ for $1\le i\le g$ recovers~\cite[Example~10.8]{ddz}.
This example is especially important, because de Jeu~\cite{deJeu:limit} showed that for
this family one gets
$g$~\emph{independent} elements of $K^T_2(C)/\mathrm{torsion}$ using a limit formula for the regulator of
these elements, see also~\cite[\S6]{dl}.

\begin{rk}\label{rk:mixed}
  Note that in all examples for $d=2g+2$ considered in~\cite{ddz}, the prescribed roots of
  the~2-torsion polynomial $t(x)$ were either all roots of the first
  factor (corresponding to $\epsilon_i=-1$)
  or all roots of the second factor (corresponding to $\epsilon_i=-1$) in the
  factorisation~\eqref{eq:t-split}.
  Therefore we can easily construct examples which do not appear in~\cite{ddz}, by
  considering a ``mixed'' situation, where not all $\epsilon_i$ are the same.
  This observation is due to Rob de Jeu.
\end{rk}

\begin{ex}\label{ex:mixed_g2}
  We apply Proposition~\ref{prop:hyp_ex_even} with $g=2,\,\epsilon_1=1$ and
  $\epsilon_2=\epsilon_3 = -1$. 
  Setting 
  \[
    \gamma = a_1^2-a_1a_2-a_1a_3+a_2a_3,
  \]
  this leads to the following family, where $a_1, a_2, a_3 \in \Q^\times$ are pairwise distinct:
\[
C:y^2 +
\left(-\frac{4a_1^3a_2a_3}{\gamma}+\frac{4a_1^3(a_2+a_3)}{\gamma}x-\frac{4a_1^3}{\gamma}x^2+2x^3\right)y +
x^6 = 0
\]
Here we have~3 elements $\{\infty,O, P_i\}\in K_2^T(C)/\mathrm{torsion}$, where $P_1 = (a_1,a_1^3),\, P_2 =
(a_2,-a_2^3),\, P_3 = (a_3, -a_3^3) \in C(\Q)$.
\end{ex}

\begin{rk}\label{rk:2inf}
  So far we have only considered hyperelliptic curves with a unique point at infinity. 
  The main reason is that in this situation Remark~\ref{rk:prod_sing} applies, so we can
  use Construction~\ref{constr:torsion}.
  In the situation where $O$ is a point of maximal contact, but there are two points at
  infinity lying above the singular point at infinity on $C'$, the construction discussed in the present
  section breaks down.
  In Section~\ref{sec:nekovar} we will show how to construct examples of hyperelliptic
  curves with two points at infinity with a given element of $K_2^T(C)/\mathrm{torsion}$
  using a different method, see Example~\ref{ex:hyp_nek}.
\end{rk}

\subsection{Integrality}\label{sec:hyp_integrality}
Lemma~\ref{lemma:hyp_ex} provides a method for constructing hyperelliptic curves $C/\Q$
with given elements of $K^T_2(C)/\mathrm{torsion}$ as in Propositions~\ref{prop:hyp_ex_odd}
and~\ref{prop:hyp_ex_even}.
Provided a simple condition is satisfied, it is easy to deduce from  
the results of~\cite{ddz} that these elements are actually integral. 
Recall that we call an element of $K_2^T(C)/\mathrm{torsion}$ integral if it
lies in $K_2(C;\Z)$,
the subgroup of $K^T_2(C)/\mathrm{torsion}$ that appears in the formulation
of Beilinson's Conjecture~\ref{conj:beilinson}.
Dokchitser, de Jeu and Zagier prove the following result:
\begin{thm}\label{thm:ddz_integral}
    Let $g \ge 1$, let $d \in \{2g+1, 2g+2\}$ and let $f_1$ be as in~\eqref{eq:f1} such that $f_1$ has integral coefficients. 
    Consider a rational root $\alpha$ of $t(x) = -x^d+f_1(x)^2/4$ and let $P = (\alpha,
    -f_1(\alpha)/2) \in C(\Q)$, where $C$ is the hyperelliptic curve given by the affine
    equation $y^2+f_1(x)y = x^d$. 
    Then, if $1/\alpha \in \Z$, the element $2\{\infty, O, P\}$ is integral.
\end{thm}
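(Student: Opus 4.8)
The plan is to reduce the claim to the integrality computations of \cite{ddz} by first exhibiting a clean representative of $2\{\infty,O,P\}$ and then checking tame symbols fibre by fibre. Since $\alpha$ is a root of $t$, on the model $y^{2}=t(x)$ the point $P=(\alpha,0)$ is an affine Weierstrass point, so $(P)-(\infty)$ is $2$-torsion; together with the hypothesis that $(O)-(\infty)$ is exactly $d$-torsion this is precisely the setting of Construction~\ref{constr:torsion} with $(P_{1},P_{2},P_{3})=(\infty,O,P)$, $h_{2}=x-\alpha$ of divisor $2(P)-2(\infty)$, and $h_{3}$ proportional to $1/y$ of divisor $d((\infty)-(O))$. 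Thus $\{\infty,O,P\}$ is the element of Proposition~\ref{prop:ddz}, and by Proposition~\ref{prop:constr_props}(i),(ii) it suffices to prove $T_{\Gamma}\bigl(2\{\infty,O,P\}\bigr)=1$ for every irreducible component $\Gamma$ of every special fibre of one proper regular model $\KC\to\Spec\Z$ of $C$. By \cite[Proposition~4.1]{dl} the choice of $\KC$ is immaterial, so I would start from a desingularization of the arithmetic surface cut out by the model~\eqref{eq:ddz_model}, which has coefficients in $\Z$ because $f_{1}\in\Z[x]$.

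The hypothesis $1/\alpha\in\Z$ enters in exactly one place. Writing $n=1/\alpha\in\Z$, one has $x-\alpha=(1-nx)/(-n)$, so the normalization of $h_{2}$ at $O$ is the integral polynomial
\[
  \frac{h_{2}}{h_{2}(O)}=\frac{x-\alpha}{-\alpha}=1-nx\in\Z[x].
\]
Starting from the symbol $S_{3}=\{h_{1}/h_{1}(\infty),\,h_{2}/h_{2}(O)\}$ of Construction~\ref{constr:torsion}, where $h_{1}$ has divisor supported on $\{O,P\}$ and (for odd $d$) $h_{1}/h_{1}(\infty)=-y^{2}/(x-\alpha)^{d}$, substituting $x-\alpha=(1-nx)/(-n)$, using the Steinberg relation $\{nx,1-nx\}=0$ together with the equation $x^{d}=-y(y+f_{1}(x))$ of $C$, and discarding the $2$-torsion symbol $\{-1,1-nx\}$, I expect to arrive at a symbol of the shape $\{1-nx,\,g\}$ with $g$ a ratio of polynomials with integral coefficients; concretely, for odd $d$,
\[
  2\{\infty,O,P\}=\Bigl\{\,1-nx,\ \frac{y+f_{1}(x)}{y}\,\Bigr\}\qquad\text{in }K_{2}^{T}(C)/\mathrm{torsion},
\]
whose two arguments have divisors $2(P)-2(\infty)$ and $d\bigl((O')-(O)\bigr)$ with $O'=(0,-f_{1}(0))$, supported on rational points (the even-degree case is entirely similar).

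With such a representative in hand the primes of good reduction are immediate: there $\KC_{p}=C_{p}$ is smooth irreducible, $1-nx$ reduces to $1-\bar n x$ (even to the nonzero constant $1$ when $p\mid n$) and $y$, $y+f_{1}$ to nonzero functions, so all three have order $0$ along $C_{p}$ and $T_{C_{p}}$ vanishes; in particular nothing bad happens at primes dividing the denominator of $\alpha$, where $P$ specializes to $\infty$, nor at $p=2$ when $C$ has good reduction there. For the finitely many primes $p\mid\disc\!\bigl(-4x^{d}+f_{1}(x)^{2}\bigr)$ I would invoke \cite{ddz}: their analysis supplies an explicit regular model at each such prime, the orders of $1-nx$ and $(y+f_{1})/y$ along its fibral components — these functions restrict to the constants $1$, $0$, $\infty$ on the exceptional curves over specializations of $O$, $P$, $\infty$, and analogously over $O$, $O'$ — and the verification that the surviving tame symbols $T_{\Gamma}$ are trivial, using relations such as $y(P)+f_{1}(\alpha)=-y(P)$ that persist on the special fibres since everything is defined over $\Z$. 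The factor $2$ is genuinely needed: for odd $d$ the class $\{\infty,O,P\}$ is by Proposition~\ref{prop:constr_props}(ii) only one half of the symbol $S_{3}$ above, and at $p=2$ the model~\eqref{eq:ddz_model} is never smooth while the denominators $2$ in $t$ and in $y(P)=-f_{1}(\alpha)/2$ intervene, so a priori the relevant $T_{\Gamma}$ only have order dividing $2$.

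I expect the main obstacle to be precisely this last step: pinning down the regular models at the bad primes — above all at $p=2$ — accurately enough to read off the fibral multiplicities of $1-nx$ and $(y+f_{1})/y$ and the resulting tame-symbol values. Granting the integrality results of \cite{ddz}, however, the only input specific to the present statement is the elementary observation of the second paragraph, that $1/\alpha\in\Z$ together with $f_{1}\in\Z[x]$ forces the normalized functions into $\Z[x]$, which is exactly what places us inside the range of validity of their criterion.
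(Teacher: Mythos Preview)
The paper's proof is a single sentence: ``This is a special case of~\cite[Theorem~8.3]{ddz}.'' Your final paragraph arrives at exactly this conclusion, so in that sense your proposal is correct and aligned with the paper.

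Everything preceding that final paragraph, however, is an attempt to \emph{reconstruct} the argument of \cite[Theorem~8.3]{ddz} rather than to prove the present statement. That reconstruction is a plausible outline of how such an integrality check proceeds---normalize the functions using $1/\alpha\in\Z$ so they lie in $\Z[x]$, then verify $T_\Gamma=1$ component by component on a regular model---and indeed this is the shape of the argument in \cite{ddz} (and of the analogous Proposition~\ref{prop:spq_fam1_int} later in this paper). But several of your intermediate claims are hedged (``I expect to arrive at'', ``I would invoke'') and would need verification: the precise form of $h_1/h_1(\infty)$, the Steinberg manipulation yielding $\{1-nx,(y+f_1)/y\}$, and above all the bad-fibre analysis at $p=2$, which you yourself flag as the main obstacle. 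None of this is carried out, so as written your proposal is a strategy, not a proof. Since the paper is content to quote the result, the honest comparison is: you and the paper agree on the logical content (it follows from \cite{ddz}), but the paper stops there while you sketch---without completing---what lies behind the citation.
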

\begin{proof}
    This is a special case of~\cite[Theorem~8.3]{ddz}.
\end{proof}
\begin{cor}
  In the situation of Proposition~\ref{prop:hyp_ex_odd} or
  Proposition~\ref{prop:hyp_ex_even}, let $1/a_i \in\Z$.
  Then the element $2\{\infty, O, P_i\}$ is integral.
\end{cor}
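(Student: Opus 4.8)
The plan is to reduce the corollary directly to Theorem~\ref{thm:ddz_integral}, since the propositions in question were built precisely so that the relevant points are of the shape required by that theorem. First I would recall the setup: in Proposition~\ref{prop:hyp_ex_odd} the curve is $y^2 + f_1(x)y + x^{2g+1} = 0$ with $f_1$ chosen (via the Vandermonde interpolation of Lemma~\ref{lem:hyp_ex_odd}) so that $f_1(a_i^2) = -2a_i^{2g+1}$, and the point is $P_i = (a_i^2, a_i^{2g+1})$; in Proposition~\ref{prop:hyp_ex_even} the curve is $y^2 + f_1(x)y + x^{2g+1} = 0$ (with the degree-$2g+2$ behaviour coming from the $2x^{g+1}$ term in $f_1$), $f_1(a_i) = -2\epsilon_i a_i^{g+1}$, and $P_i = (a_i, \epsilon_i a_i^{g+1})$. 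In both cases the underlying affine model agrees with the model $y^2 + f_1(x)y = x^d$ of Theorem~\ref{thm:ddz_integral}, up to sign conventions on the constant term, and $P_i$ has the form $(\alpha_i, -f_1(\alpha_i)/2)$ demanded there.

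The key steps, in order, are: (1) observe that $\alpha_i$ (equal to $a_i^2$ in the odd case, $a_i$ in the even case) is a rational root of the two-torsion polynomial $t(x) = -x^d + f_1(x)^2/4$; this is exactly the content of the defining conditions~\eqref{eq:key}, which were verified when applying Lemma~\ref{lemma:hyp_ex}, so nothing new is needed. (2) Check that $P_i = (\alpha_i, -f_1(\alpha_i)/2)$, again immediate from $f_1(\alpha_i) = -2\epsilon_i\alpha_i^{\,\cdot}$ with the relevant exponent, together with $y_i^2 = x_i^d$. (3) Verify the integrality hypothesis of Theorem~\ref{thm:ddz_integral}, namely that $f_1$ has integral coefficients and that $1/\alpha_i \in \Z$. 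The latter follows from the assumption $1/a_i \in \Z$ since $\alpha_i$ is either $a_i$ or $a_i^2$, and $\Z$ is closed under products. (4) Address the integrality of the coefficients of $f_1$: since $b = V^{-1}w$, the $b_j$ need not be integers even when $1/a_i \in \Z$. Here I would invoke the standard fact that Theorem~\ref{thm:ddz_integral}, and the results of~\cite[\S8]{ddz} behind it, are insensitive to rescaling $x$ and $y$; alternatively, since $1/a_i \in \Z$ one can substitute $x \mapsto x$ and clear denominators by a substitution of the form $(x,y) \mapsto (u^2x, u^{d}y)$ (or the appropriate weighted version) with a suitable integer $u$ to reach an integral model, under which the $\alpha_i$ are scaled to integers with $1/(\text{scaled }\alpha_i)$ still integral, the element $\{\infty,O,P_i\}$ is preserved, and then apply the theorem. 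Having done this, conclude that $2\{\infty, O, P_i\}$ is integral.

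The main obstacle is step~(4): making precise that one may pass to an integral model without changing the $K_2$-element and without destroying the condition $1/\alpha \in \Z$. One must be slightly careful about which weighted rescaling $(x,y)\mapsto(u^{2}x, u^{d}y)$ keeps the equation of the form~\eqref{eq:ddz_model} with integral $f_1$ and integral constant term — this forces $u$ to be divisible by the denominators of the $b_j$ — and one must check that the resulting $\alpha_i$ is of the form $u^{2}a_i^{2}$ (resp. $u^2 a_i$) times a unit and hence still has integral inverse, using $1/a_i\in\Z$ and that $u$ is an integer only if one is careful (in fact one should clear denominators rather than multiply, or equivalently track that $1/(u^2a_i^2)\in\Z$ need not hold — so the cleaner route is to note that $\{\infty,O,P\}$ and the integrality statement are already proved in~\cite{ddz} for the non-integral model provided $1/\alpha\in\Z$, because the proof of~\cite[Theorem~8.3]{ddz} works with the tame symbols at all finite places and only the places dividing the denominators of the $b_j$ require the hypothesis $1/\alpha\in\Z$). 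I would therefore phrase the proof so as to quote Theorem~\ref{thm:ddz_integral} essentially verbatim, checking only its hypotheses, and relegate the model-integrality bookkeeping to a remark, since the theorem as stated already allows arbitrary integral $f_1$ and the chosen $f_1$ can be taken integral after the harmless substitution $a_i \mapsto$ clearing the common denominator of the $b_j$ into the curve, which does not affect $1/a_i \in \Z$ when one rescales consistently.
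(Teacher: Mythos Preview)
Your overall strategy matches the paper's: recognise each $P_i$ as a rational affine Weierstrass point $(\alpha,-f_1(\alpha)/2)$ with $\alpha$ a root of $t(x)$, check $1/\alpha\in\Z$, and invoke Theorem~\ref{thm:ddz_integral}. Steps~(1)--(3) are correct and are exactly what the paper does in the first paragraph of its proof.

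The gap is in step~(4). The hypothesis $1/a_i\in\Z$ concerns a \emph{single} index $i$; the remaining $a_j$ are arbitrary nonzero rationals, so $b=V^{-1}w$ will typically not have integral entries and Theorem~\ref{thm:ddz_integral} does not apply directly. Your proposed weighted rescaling $(x,y)\mapsto(u^2x,u^dy)$ does produce an integral model, but it multiplies $\alpha_i$ by $u^2$, and---as you yourself observe---$1/(u^2\alpha_i)\in\Z$ need not hold. You then back off to a vaguer appeal to~\cite[Theorem~8.3]{ddz} and an unspecified ``harmless substitution'', which is not a proof.

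The paper resolves this with a transformation that scales only $y$: take $\psi:(x,y)\mapsto(x,ay)$ for a suitable $a\in\Q^\times$, landing on a model $\widetilde{C}:y^2+\widetilde{f_1}(x)y+x^d=0$ with $\widetilde{f_1}\in\Z[x]$. Because the $x$-coordinate is untouched, $\psi(P_i)$ still has $x$-coordinate $x_i$, so the condition $1/x_i\in\Z$ carries over for free; one then applies Theorem~\ref{thm:ddz_integral} on $\widetilde{C}$ and transports integrality back along the isomorphism $\psi$. The missing idea in your step~(4) is simply: rescale $y$, not $x$.
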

\begin{proof}
    Recall that the points $P_i=(x_i,y_i)$ are precisely the rational affine Weierstrass points $P$
which appear in Proposition~\ref{prop:ddz}.
    So if $f_1 \in \Z[x]$ and $1/x_i \in \Z$, then Theorem~\ref{thm:ddz_integral} implies that 
  $2\{\infty, O, P_i\}$ is integral.

If $f_1 \notin \Z[x]$, then we can find a polynomial $\widetilde{f_1}\in \Z[x]$ and a rational number $a \in
\Q^\times$ such that the given model of $C$ can be transformed into 
\[
    \widetilde{C} : y^2 + \widetilde{f_1}(x)y + x^d =0
\]
via the transformation $\psi:C\to\widetilde{C}$ taking
$(x,y) \in C$ to $(x, ay)\in \widetilde{C}$.
Hence the reciprocal of the $x$-coordinate of $\psi(P_i)$ is
equal to $1/x_i$.
So if $1/x_i \in \Z$, then 
\[
2\psi(\{{\infty}, {O}, {P_i}\}) =   2\{{\psi(\infty)}, {\psi(O)}, {\psi(P_i)}\}
\]
is integral by Theorem~\ref{thm:ddz_integral}, and therefore 
  $2\{\infty, {O}, {P_i}\}$ is integral as well.
\end{proof}
In particular, for each $m \le g+1$, we can find families of hyperelliptic curves in $m$
integral parameters  having $m$ explicitly given elements of $K_2(C;\Z)$.
Unfortunately, it is in general not at all easy to check how many of these elements are
actually independent, but see~\cite{dl}.

\section{Elements on quartics coming from line configurations}\label{sec:spq}
We saw in the previous section that by results of~\cite{ddz} and~\cite{deJeu:limit}  one
can construct hyperelliptic curves $C$ of arbitrary
genus $g$ with at least $g$ independent elements of $K^T_2(C)/\mathrm{torsion}$ (and even $K_2(C;\Z)$) using vertical tangent
lines and lines having maximal contact multiplicity with $C$.
We would like to adapt this strategy to non-hyperelliptic curves.
Because all genus~2 curves are hyperelliptic, it is natural to consider smooth plane
quartics, since every non-hyperelliptic curve of genus~3 can be
canonically embedded into $\BP^2$ as a smooth plane quartic.

Similar to Section~\ref{sec:hyp}, we will use smooth projective plane curves $C$ of degree~4 having
two rational points $\infty$ and $O$ of maximal contact with lines $L_{\infty}$ and
$L_{O}$, respectively.
Such points are usually called {\em hyperflexes}.
The difference is that in the quartic situation, such a curve can be embedded as a smooth
curve into $\BP^2$ (without weights), so that we can work directly with the smooth curve
$C$ and the lines are simply the respective tangent lines to $C$.
Using a transformation, if necessary, we might as well assume that $\infty=(0:1:0)$ is the unique point at
infinity on $C$ and that $O = (0:0:1)$. 
Vermeulen~\cite{vermeulen} showed that in this case $C$ can be described as the
projective closure in $\BP^2$ of the affine curve given by an equation
\begin{equation}\label{eq:spq}
    F(x,y) :=  y^3 + f_{2}(x)y^{2} + f_1(x)y + x^4 = 0,
\end{equation}
where $\deg(f_1) \le 2$ and $\deg(f_2) \le 1$.
For the remainder of the present section, we assume that $C$ is of this form.
Then the divisor $(O) -(\infty)$ is a torsion divisor induced by the tangent line $L_O:
y=0$, since
\[
  \div(y) = 4(O) -4(\infty).
\]

For our purposes, the smooth plane quartic analogue of a rational point on a hyperelliptic curve with a vertical
tangent is an inflection point having a vertical tangent $L_\alpha = x-\alpha$, where $\alpha
\in \Q$.
Such a point $P \in C$ has the property that $L_\alpha$ intersects $C$ in $P$ with multiplicity~3 and in
$\infty$ with multiplicity~1.
If such a point $P$ exist, then we are in the situation of
Construction~\ref{constr:torsion} and hence we get an element
\[
  \{\infty, O, P\} \in K_2^T(C)/\mathrm{torsion}.
\]
We first translate this geometric condition on the point $P$ (or rather the line
$L_\alpha$) into a condition on the polynomials $f_1$ and $f_2$ 

\begin{lemma}\label{lemma:quartic_vertical}
    The vertical line $L_\alpha$ intersects $C$ in the point $P=
    (\alpha, \beta)\in C(\Q)$ with multiplicity~3 if and only if 
    \begin{equation}\label{eq:spq_cond2}
      \beta^3 = -\alpha^4,\;\; f_1(\alpha) =
      3\beta^2\;\;\textrm{and}\;\;f_2(\alpha) = -3\beta .
    \end{equation}

\end{lemma}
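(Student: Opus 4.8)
The plan is to exploit that $L_\alpha$ is a \emph{vertical} line, which turns the intersection condition at the affine point $P$ into a statement about a single monic cubic in one variable. First I would observe that for the affine point $P=(\alpha,\beta)\in C(\Q)$ the intersection multiplicity $I_P(L_\alpha,C)$ equals the multiplicity of $\beta$ as a root of
\[
  g(y):=F(\alpha,y)=y^{3}+f_{2}(\alpha)y^{2}+f_{1}(\alpha)y+\alpha^{4}\in\Q[y].
\]
This is the standard behaviour of intersection numbers when one of the curves is a coordinate line: after the translation $x\mapsto x+\alpha$ the line $L_\alpha$ becomes $\{x=0\}$, and
\[
  \O_{\BP^{2},P}\big/(L_\alpha,F)\;\cong\;\Q[y]_{(y-\beta)}\big/\big(g(y)\big),
\]
whose dimension over $\Q$ is exactly $\ord_{y=\beta}g(y)$. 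Here the point $\infty=(0:1:0)$, which by the shape of~\eqref{eq:spq} is the only point of $C$ at infinity and which also lies on $L_\alpha$, is irrelevant, since we only look locally at the affine point $P$.

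Since $g$ is monic of degree $3$, any of its roots has multiplicity at most $3$, and the multiplicity of $\beta$ equals $3$ precisely when $g(y)=(y-\beta)^{3}$; in particular in that case $\beta$ is the \emph{only} root of $g$, so $P$ is indeed the only affine intersection point of $L_\alpha$ and $C$. Expanding $(y-\beta)^{3}=y^{3}-3\beta y^{2}+3\beta^{2}y-\beta^{3}$ and matching the coefficients of $y^{2}$, $y$ and $1$ against those of $g$ gives $f_{2}(\alpha)=-3\beta$, $f_{1}(\alpha)=3\beta^{2}$ and $\alpha^{4}=-\beta^{3}$, i.e.\ exactly~\eqref{eq:spq_cond2}. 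Conversely, if~\eqref{eq:spq_cond2} holds then $g(y)=(y-\beta)^{3}$, so $\beta$ is a triple root of $g$ and $I_P(L_\alpha,C)=3$.

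There is no genuine obstacle here; the argument is in essence a coefficient comparison. The only step deserving a word of justification is the identification $I_P(L_\alpha,C)=\ord_{y=\beta}F(\alpha,y)$, which can either be quoted from any standard account of intersection multiplicities of plane curves or checked directly from the local-ring description above. (Alternatively one could argue via B\'ezout: $L_\alpha$ meets $C$ at $\infty$ with multiplicity $1$, as a short computation in an affine chart at infinity shows, so the affine intersection numbers sum to $3$ and $I_P=3$ forces $g(y)=(y-\beta)^{3}$; but the local computation makes this detour unnecessary.)
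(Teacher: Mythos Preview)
Your argument is correct and follows essentially the same route as the paper: both reduce the intersection-multiplicity condition to the identity $F(\alpha,y)=(y-\beta)^3$ and then compare coefficients. You supply more detail on why $I_P(L_\alpha,C)=\ord_{y=\beta}F(\alpha,y)$ (via the local-ring description), whereas the paper simply observes that the roots of $F(\alpha,y)$ are the $y$-coordinates of the affine intersection points, but the substance is the same.
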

\begin{proof}
  In analogy with the proof of Lemma~\ref{lemma:hyp_ex}, 
  the conditions~\eqref{eq:spq_cond2} are equivalent to 
\begin{equation}\label{eq:spq_cond}
    F(\alpha, y) = (y-\beta)^3.
\end{equation}
The assertion follows,
because the roots of $F(\alpha,y)$ are the $y$-coordinates of the affine
intersection points between $C$ and $L_{\alpha}$.
\end{proof}

For a positive integer $m$, we can try to use Lemma~\ref{lemma:quartic_vertical} to construct smooth plane quartics $C:F=0$,
where $F$ is as in~\eqref{eq:spq},  which have $m$ inflection points $P_i$ with vertical tangent
lines, giving rise to $m$ elements $\{\infty, O, P_i\} \in K_2^T(C)/\mathrm{torsion}$ by
virtue of Construction~\ref{constr:torsion}, similar to Proposition~\ref{prop:hyp_ex_odd}
and~\ref{prop:hyp_ex_even}.
This amounts to solving a system of linear equations, given by~\eqref{eq:spq_cond2}.
But we can only expect polynomials $f_1$ and $f_2$ as in the Lemma to exist if 
$m \le 2$, because $\deg(f_2) \le 1$.
The case $m=2$ would lead to a smooth plane quartic $C$ for which we have elements
\[
\{\infty, O, P_1\}, \{\infty, {O}, {P_2}\}, \{{\infty}, {P_1},
{P_2}\} \in K^T_2(C)/\mathrm{torsion}.
\]
However, by a calculation analogous to~\cite[Example~5.2]{ddz}, 
the element $\{{\infty}, {P_1}, {P_2}\}$ is trivial in $K^T_2(C)/\mathrm{torsion}$,
 so that for $m\le 2$ we only get at most~2 independent elements of
 $K^T_2(C)/\mathrm{torsion}$ because of parts (iii), (iv) and (v) of Proposition~\ref{prop:constr_props}.

\begin{rk}
    Suppose that $F$ is of the form \eqref{eq:spq} and that $C:F=0$ is the corresponding 
    smooth plane quartic and such that there are~3 distinct inflection points
    $P_i = (x_i,y_i) \in C(\Q)$ having vertical tangents.
Then it turns out that such a curve corresponds
to a nonsingular $\Q$-rational point $(a_1,a_2,a_3)$ on the projective curve 
defined in $\BP^2$ by
\[
 a_1^2a_2^2+a_1^2a_2a_3+a_1^2a_3^2+a_1a_2^2a_3+a_1a_2a_3^2+
 a_2^2a_3^2=0,
\]
where $x_i = a_i^3$ and $y_i = -a_i^4$.
Using (for instance) the computer algebra system {\tt Magma}~\cite{magma}, one sees easily
that this curve has~3 singular points
$(0:0:1), (0:1:0)$ and $(1:0:0)$ and 
its normalization is isomorphic over $\Q$ to the conic defined by
\[
X^2 + XY + Y^2 + XZ + YZ + Z^2=0\,,
\]
which has no rational points.
Hence, no such curve $C$ can exist.
\end{rk}

Therefore inflection points with vertical tangent lines are not sufficient to construct~3
independent elements of $K^T_2(C)/\mathrm{torsion}$ for a smooth plane quartic $C$ with two hyperflex points in
$\infty$ and $O$.
Instead, we will only use one such inflection point.
In addition, we will construct an inflection point whose tangent line also intersects the quartic
in $O$ (instead of
$\infty$).
To this end, we work on a different model $D$, where the parts of $O$ and $\infty$ are
interchanged and use an inflection point on $D$ with a vertical tangent.
These conditions lead to a system of linear equations whose solution gives us the family
presented in Theorem~\ref{thm:spq_fam1} below.
See Figure~\ref{fig:qex1} on page~\pageref{fig:qex1} for an example of this geometric
configuration.

Recall that the {\em discriminant} of a projective plane quartic curve is defined
to be the discriminant of the ternary quartic form defining the curve, cf.~\cite{salmon}.
It vanishes if and only if the curve is singular.
\begin{thm}\label{thm:spq_fam1}
    Let $a,b\in \Q^\times$ such that $a \ne b$, let $c \in \Q$ and let 
    \begin{equation}\label{quartic-param}
        f_1(x) = a^6b^6+a^3b^3cx+(3a^2-b^6-b^3c)x^2\;\textrm{ and }\;
    f_2(x) = 3a^4-a^3c+cx.
\end{equation}
Suppose that $\mathrm{disc}(C)\ne0$.
Then the smooth plane quartic $C$ given by the affine equation
\[
    F(x,y) =  y^3 + f_{2}(x)y^{2} + f_1(x)y + x^4 = 0
\]    
has the following properties:
\begin{enumerate}[(i)]
    \item We have $P = (a^3, -a^4) \in C(\Q)$ and the tangent line 
    $ L_P:\, x=a^3 $
    through $P$ has contact multiplicity $3$ with $C$; the other point of  intersection  is $\infty$. 
    \item We have $Q = (-a^2b^3, -a^2b^6) \in C(\Q)$ and the tangent line $L_Q:\, y= b^3 x$ 
    through $Q$ has contact multiplicity $3$ with $C$; the other point of
    intersection  is $O$. 
  \item We get the following elements of $K^T_2(C)/\mathrm{torsion}$:
    \[
    \{{\infty}, O, P\},\,\{{\infty}, O, Q\},\, \{{\infty}, P, Q\}
  \]
\end{enumerate}
\end{thm}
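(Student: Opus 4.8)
The plan is to verify the three claimed geometric incidences by direct computation, and then invoke Construction~\ref{constr:basic} together with Remark~\ref{rk:prod_sing} to deduce the $K_2$-elements. For part~(i), I would apply Lemma~\ref{lemma:quartic_vertical} with $\alpha = a^3$ and $\beta = -a^4$: one checks that $\beta^3 = -a^{12} = -\alpha^4$, that $f_1(a^3) = a^6b^6 + a^6b^3c + (3a^2 - b^6 - b^3c)a^6 = 3a^8 = 3\beta^2$, and that $f_2(a^3) = 3a^4 - a^3c + a^3c = 3a^4 = -3\beta$. So $F(a^3, y) = (y + a^4)^3$, which says exactly that the vertical line $L_P: x = a^3$ meets $C$ with multiplicity~$3$ at $P$ and (since $\deg C = 4$) with multiplicity~$1$ at the unique point at infinity $\infty = (0:1:0)$. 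This confirms~(i) and also shows $P \in C(\Q)$.

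For part~(ii), the honest approach is to pass to the model $D$ in which the roles of $O = (0:0:1)$ and $\infty = (0:1:0)$ are interchanged, i.e.\ apply the coordinate swap $(x:y:z) \mapsto (x:z:y)$ (equivalently, dehomogenize with respect to $y$ instead of $z$, replacing $(x,y)$ by $(x/y, 1/y)$), and then check that the line $L_Q: y = b^3x$ becomes a vertical line on $D$ which is tangent to $D$ at the image of $Q$ with intersection multiplicity~$3$, the fourth intersection point being the image of $O$. Concretely, I would substitute $y = b^3 x + (\text{something})$ — or more cleanly, homogenize $F$, substitute the linear form defining $L_Q$, and show the resulting binary quartic factors as $(\text{linear in the }O\text{-direction}) \cdot (\text{cube vanishing at }Q)$. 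In practice the cleanest check is: set $y = b^3 t$, $x = t$ and also track the intersection with $z = 0$; one expects $F$ restricted to $L_Q$ to vanish to order~$3$ at $Q = (-a^2b^3 : -a^2b^6 : 1)$ and order~$1$ at $O = (0:0:1)$. The parametrization~\eqref{quartic-param} was chosen precisely so that this holds, so this is a determinate (if slightly tedious) polynomial identity.

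Given (i) and (ii), part~(iii) is essentially formal. By~(i), $\infty$ and $P$ satisfy the hypotheses of Construction~\ref{constr:basic} (with $L_\infty$ the line at infinity, $L_O: y = 0$ giving $\div(y) = 4(O) - 4(\infty)$, and $L_P: x = a^3$ meeting $C$ only in $P$ and $\infty$), so $\{\infty, O, P\} \in K_2^T(C)/\mathrm{torsion}$; note the smooth quartic needs no normalization, so Remark~\ref{rk:prod_sing} is not even required here. By~(ii), $L_Q$ meets $C$ only in $Q$ and $O$, and since $\div(L_Q/L_\infty) = 4(Q)-4(\infty)$ up to the contribution at $O$ — more precisely $\div(L_Q) - \div(y) $ has support in $\{Q,O,\infty\}$ — the divisors $(Q)-(\infty)$, $(Q)-(O)$, $(P)-(Q)$ are all torsion, and Construction~\ref{constr:basic} (in the variant with the extra curve $D = L_Q$ playing the role of the distinguished curve through a second point) yields $\{\infty, O, Q\}$ and $\{\infty, P, Q\} \in K_2^T(C)/\mathrm{torsion}$.

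The main obstacle is purely computational: verifying~(ii), i.e.\ checking that the specific linear form $y - b^3 x$ cuts out a divisor of the shape $3(Q) + 1\cdot(O)$ on $C$ with $Q$ rational, is the one step where the special form of $f_1, f_2$ in~\eqref{quartic-param} is genuinely used, and it requires either the explicit model change $D$ or a careful homogeneous substitution and factorization. Everything else — (i) via Lemma~\ref{lemma:quartic_vertical}, and (iii) via Construction~\ref{constr:basic} — is routine. One should also remark that the smoothness hypothesis $\mathrm{disc}(C) \neq 0$ is what makes $C$ an honest smooth plane quartic so that the tangent-line language and the torsion construction apply without the detour through a normalization.
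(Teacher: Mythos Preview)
Your approach matches the paper's almost exactly: part~(i) via Lemma~\ref{lemma:quartic_vertical}, part~(ii) by swapping the roles of $O$ and $\infty$ to reduce again to Lemma~\ref{lemma:quartic_vertical}, and part~(iii) by observing that all pairwise differences among $\infty,O,P,Q$ are torsion (the paper phrases this via Proposition~\ref{prop:constr_props}, exhibiting $\div(x-a^3)=3(P)-3(\infty)$ and $\div((y-b^3x)/y)=3(Q)-3(O)$ explicitly, which is a bit cleaner than your divisor bookkeeping).

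The one technical point you should tighten in~(ii): the bare swap $(X:Y:Z)\mapsto(X:Z:Y)$ does \emph{not} put the new model into the normalized shape $z^3+g_2(w)z^2+g_1(w)z+w^4=0$ required by Lemma~\ref{lemma:quartic_vertical}, because the constant term of $f_1$ is $a^6b^6$ rather than~$1$. The paper uses the scaled transformation $(X:Y:Z)\mapsto(X:a^2b^2Z:Y)$, which sends $Q$ to $(1/b^3,-1/b^4)$ on a model $D$ of the correct form, after which Lemma~\ref{lemma:quartic_vertical} applies verbatim with $\alpha=1/b^3$, $\beta=-1/b^4$. Your alternative of directly factoring the restriction of $F$ to $L_Q$ would also work, of course, but you should be aware that the model-change route needs this scaling.
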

\begin{proof}
We have 
\[
  (a^3)^4 = -(-a^4)^3,\;\; f_1(a^3) = 3a^8\;\;\textrm{and}\;\; f_2(a^3) = 3a^4,
\]
so that (i) follows immediately from Lemma~\ref{lemma:quartic_vertical}.

To prove (ii), we again want to use Lemma~\ref{lemma:quartic_vertical}.
  To this end, we apply the projective transformation
\[
    \varphi: C \to D,\quad (X:Y:Z) \mapsto (X:a^2b^2Z:Y),
\]
where $D$ is the smooth plane quartic given by the affine equation
\[
    D: G(w, z) =z^3 + g_2(w)z^2 + g_1(w)z + w^4 = 0.
\]
This transformation maps $O$ to the unique point at infinity $(0:1:0) \in D$, it maps $\infty$ to
$(0,0) \in D$ and $Q$ to $\varphi(Q) = (\frac{1}{b^3}, -\frac{1}{b^4})$.
Because we also have
\[
  g_1\left(\frac{1}{b^3}\right) =  \frac{3}{b^8} \;\;\textrm{and}\;\;
    g_2\left(\frac{1}{b^3}\right) = -\frac{3}{b^4},
\]  
Lemma~\ref{lemma:quartic_vertical} is applicable exactly as in (i) and assertion (ii) follows.

It remains to prove (iii).
For this it suffices to note that by (i) we have
\[
    \div(x-a^3) = 3(P) - 3(\infty)
\]
and by (ii) we have
\[
    \div\left(\frac{y-b^3x}{y}\right) = 3(Q)-3(O),
\]
where we consider the functions on the left hand sides as functions on $C$.
Hence the pairwise differences $(R) - (S)$  are torsion divisors for all $R, S$ in the set
$\{\infty, O, P, Q\}$ and the result follows from Proposition~\ref{prop:constr_props}.
\end{proof}

\begin{rk}\label{rk:symm}
    If $c=0$ in Theorem~\ref{thm:spq_fam1}, then we have the additional rational points
    $P' = (-a^3, -a^4)$ and $ Q'= (a^2b^3, -a^2b^6)$ on $C$. 
    The respective tangent lines $L_{P'} :\, x=-a^3$ and $L_{Q'} :\, y = -b^3x$  through $P'$
    and $Q'$ also have  contact multiplicity $3$ with $C$ at these points and intersect
    $C$ in $\infty$ and $O$, respectively. 
    This yields
the following additional   elements of $K^T_2(C)/\mathrm{torsion}$:
\[
  \{\infty, O, P'\}, \{\infty, O, Q'\}, \{\infty, P', Q'\},
  \{\infty, P, Q'\}, \{\infty, P', Q\}, \{\O, P', Q\}, \{\infty, P, Q'\}
\]
 \end{rk}

\begin{rk}\label{rk:disc}
    Invariants of non-hyperelliptic curves of genus~3 can be computed, for instance, using
    David Kohel's {\tt Magma} package {\tt Echidna} \cite{echidna}.
    With its aid, we find that the discriminant of the curve $C$ from Theorem~\ref{thm:spq_fam1}
    is equal to
    \[
        \textrm{disc}(C) =
a^{42}b^{36}(a + b^3)^3(2a - 2b^3 - c)^6(3a - 2b^3 - c)(3a -
b^3 - c)(6a + 2b^3 + c)^2q(a,b,c),
\]
where 
\begin{align*}
  q(a,b,c)=&\;9a^5 + 30a^4b^3 - 24a^4c +
4a^3b^6 + 23a^3b^3c + 16a^3c^2 + 12a^2b^9 +
3a^2b^6c + 12a^2b^3c^2 \\
&- 3ab^{12} - 3ab^9c - 2b^{15} - 5b^{12}c - 4b^9c^2 - b^6c^3,
 \end{align*}
 making it easy to check whether $C$ is smooth or not.
\end{rk}

\begin{rk}
  It is tempting to generalize the approach described above to non-hyperelliptic curves of
  genus $g>3$ which have two points of maximal contact with
  lines and have tangent lines which pass through one of these two points and precisely one
  other point, 
  However, we have found that this does not allow us to construct $g$ elements of
  $K^T_2(C)/\mathrm{torsion}$, since the dimensions of the solution spaces of the resulting systems of
  linear equations are too small.
\end{rk}

\subsection{Integrality}\label{sec:spq_integrality}
Now we investigate when the elements of $K^T_2(C)/\mathrm{torsion}$ constructed in
Theorem~\ref{thm:spq_fam1} are integral, 
It turns out that we can easily write down a family in~three integral parameters such that
two of the elements are integral, but if we want all~three of them to be integral, then we
have to restrict to a one parameter subfamily.
\begin{prop}\label{prop:spq_fam1_int}
    Let $a,b\in \Q^\times$ such that $a \ne b$, let $c \in \Q$ and let 
    $C$ be as in Theorem~\ref{thm:spq_fam1} such that $\mathrm{disc}(C)\ne0$.
    \begin{enumerate}[(i)]
        \item If $1/a , b, c\in \Z$, then 
             $\{\infty, O, P\}$ and $ \{\infty, O,
           Q\}$ are integral.
      \item 
          If  $a=\pm\frac{1}{2}, b=\mp 1, c \in \Z$, then 
          $2\{\infty, P, Q\}$ is integral. 
    \end{enumerate}
\end{prop}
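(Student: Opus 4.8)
The plan is to represent the three elements by explicit Milnor symbols and then to check the integrality condition --- vanishing of every tame symbol $T_\Gamma$ on a proper regular model $\KC\to\Spec(\Z)$ --- component by component. From $\div(x-a^3)=3(P)-3(\infty)$, $\div(y)=4(O)-4(\infty)$ and $\div\bigl((y-b^3x)/y\bigr)=3(Q)-3(O)$ (all of which appear in the proof of Theorem~\ref{thm:spq_fam1}), Construction~\ref{constr:torsion} and Proposition~\ref{prop:constr_props}(ii) give, in $K_2^T(C)/\mathrm{torsion}$,
\[
  \{\infty,O,P\}=\Bigl\{1-\tfrac{x}{a^3},\,-\tfrac{a^4}{y}\Bigr\},\qquad
  \{\infty,O,Q\}=\Bigl\{-\tfrac{a^2b^6}{y},\,\tfrac{y}{y-b^3x}\Bigr\},
\]
and a similar expression for $\{\infty,P,Q\}$ with entries built from $x-a^3$, $y$ and $y-b^3x$.

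For (i) I would apply the weighted rescaling $(x,y)\mapsto(a^3x,a^4y)$ --- legitimate over $\Z$ since $1/a\in\Z$, and of the type used in Section~\ref{sec:hyp_integrality} --- which turns $C$ into an integral model $\widetilde C$ on which $P$ becomes the point $(1,-1)$ and $\{\infty,O,P\}$ becomes the coefficient-free symbol $\{1-x,\,-1/y\}$. Its tame symbols are trivial for structural reasons: $\infty$ is a smooth point of every special fiber; the locus $x=1$ meets every fiber only in $(1,-1)$ (the identity, valid over $\Z$, that the equation of $\widetilde C$ restricted to $x=1$ is $(y+1)^3$ persists in every characteristic), the locus $y=0$ only in $O$, and these two loci are disjoint; so along any component $\Gamma$ on which $1-x$ or $y$ fails to be a unit, the other entry restricts to the value $1$ at the relevant centre, and $T_\Gamma$ is a power of $1$. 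This structural rigidity --- not any extra factor --- is why $2$ does not intervene here, in contrast with the hyperelliptic case. The element $\{\infty,O,Q\}$ is handled identically on the integral rescaling of the model $D$ of Theorem~\ref{thm:spq_fam1}, where it becomes $\{1-w,\,-z\}$ and the image of $Q$ becomes $(1,-1)$.

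For (ii), the hypotheses imply those of (i), so $\{\infty,O,P\}$ and $\{\infty,O,Q\}$ are already integral. Applying the same rescaling with $a=\pm\tfrac12$, $b=\mp1$, $c\in\Z$ produces the integral model
\[
  \widetilde C:\ y^3+(\pm2cx+3\mp2c)\,y^2+\bigl((\pm4c-1)x^2\mp4cx+4\bigr)\,y+x^4=0,
\]
on which $P$ becomes $(1,-1)$, $Q$ becomes $(2,-4)$, and one computes $2\{\infty,P,Q\}=-\bigl\{(y+2x)^8/y^2,\,x-1\bigr\}$. Since its first entry is an $8$th power divided by a $2$nd power, it has even order along every component of every special fiber; hence at each prime of bad reduction of $\widetilde C$ --- in particular at $p=2$, which is always bad --- the tame symbols on the components where $x-1$ or $(y+2x)^8/y^2$ is the non-unit come out as $\pm1$ raised to an even exponent, i.e.\ as $1$. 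It is precisely this parity that makes the factor $2$ unavoidable: for the undoubled element $\{\infty,P,Q\}$, whose corresponding entry $-(y+2x)^4/y$ need not have even order along an exceptional component over $O$ in characteristic $2$, one picks up a genuine $-1$. (Alternatively, since all six differences $(R)-(S)$, $R,S\in\{\infty,O,P,Q\}$, are torsion, Proposition~\ref{prop:constr_props}(iv) supplies a $\Z$-linear relation among the four triples; evaluated at $a=\pm\tfrac12$, $b=\mp1$ this should express $2\{\infty,P,Q\}$ through integral elements, once $\{O,P,Q\}$ is checked to be integral as well.)

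The step I expect to be the main obstacle is the local analysis at $p=2$ in case (ii): the reduction of $\widetilde C$ modulo $2$ is $y^3+y^2+x^2y+x^4$, a polynomial in $x^2$ and $y$, singular at $O$ and at $P\bmod 2$; one must resolve it over $\Z_2$, determine the multiplicities and the orders $\ord_\Gamma$ of the symbol entries on the exceptional components, and confirm that the doubled tame symbols vanish. This resolution --- and the parity phenomenon it exhibits --- is the analogue for our family of the $\mu_2$-valued computation in \S8 of \cite{ddz}; the rest is bookkeeping.
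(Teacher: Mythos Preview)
Your plan is sound and close in spirit to the paper's proof, but you use a genuinely different integral model: the weighted rescaling $(x,y)\mapsto(a^3x,a^4y)$, whereas the paper rescales only $y$ via $\psi:(x,y)\mapsto(x,a^6y)$. Your choice pays off in (i): the identity $\widetilde F(1,y)=(y+1)^3$ over $\Z$ (which is correct) forces the locus $x=1$ to meet every special fiber only in $\tilde P=(1,-1)$, and the case analysis collapses. The paper instead splits into $\ord_p(d)>0$ (where one symbol entry is identically $1$ on the fiber) versus $\ord_p(d)=0$, and then treats singular centres by hand; your route is cleaner here.

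For (ii) your parity argument is valid, but the explanation conflates two distinct mechanisms and mislocates the role of the hypothesis. At exceptional components over $\tilde P$ the entry $(y+2x)^8/y^2$ is a \emph{unit with value $1$} (since $(-1+2)^8/(-1)^2=1$); at components over $\tilde Q$ the entry $x-1$ is a unit with value $1$; in both cases the tame symbol is $1^{\text{anything}}$, not ``$\pm1$ to an even power''. The even-order argument is genuinely needed only over $O$, where $x-1$ has value $-1$ and $(y+2x)^8/y^2$ has even order. It is the $\tilde P$ case, not the parity, that consumes the hypothesis $a=\pm\tfrac12,\,b=\mp1$: on your model the line through $O$ and $\tilde Q$ is $y+b^3dx=0$, and for the value of $(y+b^3dx)^8/y^2$ at $\tilde P=(1,-1)$ to be $1$ one needs $b^3d+1=\pm1$, i.e.\ $b^3d=-2$. (In the paper this is the step ``$h_3(P_0)=(b^3+a)/a=-1$''.) Your claim that the factor $2$ is forced by a genuine $-1$ over $O$ \emph{in characteristic $2$} is wrong: in characteristic $2$ one has $-1=1$ in $k_2(\Gamma)^\times$, so no obstruction can arise there, and on your model $O=(0,0)$ is singular only at $p=2$ (since $\partial\widetilde F/\partial y|_{(0,0)}=4$). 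The doubled statement you are proving is of course correct; it is only your diagnosis of the obstruction that fails.

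Two smaller points. First, verify rather than assert the identity $2\{\infty,P,Q\}=-\{(y+2x)^8/y^2,\,x-1\}$: tracing Construction~\ref{constr:torsion} shows your symbol equals $\pm2$ times the paper's representative $\{h_3,h_4\}$ modulo torsion, which suffices for integrality but is not literally what you wrote. Second, the parenthetical alternative via Proposition~\ref{prop:constr_props}(iv) would still require proving integrality of $\{O,P,Q\}$, which is no easier.
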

\begin{proof}
        To prove part (i), suppose that $d := 1/a, b, c \in \Z$.
Then the smooth plane quartic given by the affine model
\[
  \widetilde{C} : d^{18}y^3 + d^{12}{f_2}(x)y^2 + d^{6}f_1(x)y + x^4
\]
has coefficients in $\Z$ and is isomorphic to $C$ via the transformation
\[
  \psi : C \to \widetilde{C};\qquad (x,y) \mapsto (x, a^6y).
\]
By functoriality, the element $\{\infty, O, P\}$ induces an element
of $K_2(C;\Z)$ if and only if its image under $\psi$ induces an element of
$K_2(\widetilde{C};\Z)$.

Applying Construction~\ref{constr:torsion} to the points $\infty, O$ and $P$, we find that
\[
  \{\infty, O, P\} = \left\{-\frac{y}{a^4},\,1-\frac{x}{a^3}\right\}.
\]
Hence we get
\[
  S_1 := \psi(\{\infty, O, P\}) = \{{\psi(\infty)}, {\psi(O)}, {\psi(P)}\} =
  \left\{ h_1,h_2\right\} \in K^T_2(\widetilde{C})/\mathrm{torsion}, \\
\]
where $h_1 = -d^{10}y$ and $h_2=1-d^3x$.

We will show integrality of this element 
using a strong desingularization $\widetilde{\KC}$ (cf.  \cite[\S8.3.4]{liu}) of the
Zariski closure $\widetilde{C}^Z$ of $\widetilde{C}$ in $\BP^2_{\Z}$. 
In other words, $\widetilde{\KC}$  is a proper regular model of $\widetilde{C}$ such that
there exists a proper birational morphism 
\[\pi : \widetilde{\KC} \to   \widetilde{C}^Z\] 
which is an isomorphism outside the singular locus of $\widetilde{C}^Z$. 
Following the proof of~\cite[Theorem~8.3]{ddz}, we will prove the integrality of the class
of $S_1$ using the behaviour of the functions  $h_1$ and $h_2$ on $\widetilde{C}^Z$.

If $p$ is a prime such that $\ord_p(d) >0$, then $h_2=1$ on
$\widetilde{\KC}_p$ and hence $T_\Gamma(S_1) = 1$ for every irreducible component
$\Gamma $ of $\widetilde{\KC}_p$.

Hence we may assume that $\ord_p(d) = 0$.
If $\Gamma $ is an irreducible component of $\widetilde{\KC}_p$ such that $\pi(\Gamma) =
\Delta$ is an irreducible component of  $\widetilde{C}^Z_p$, then it is easy to see from
the given equation of $\widetilde{C}$ that
\[
  \ord_\Gamma(h_1) = \ord_\Gamma(h_2) = 0,
\]
which implies that $T_\Gamma(S_1)=1$.

In order to finish the proof that $S_1$ induces an element of
$K_2(\widetilde{C};\Z)$, we have to consider the case of a prime number $p$
such that $\ord_p(d) =0$ and such that $\pi(\Gamma) = P_0$ is a singular point of
$\widetilde{C}^Z$.
One checks directly that $P_0$ must be an affine point of $\widetilde{C}^Z_p$, say $P_0 =
(x_0, y_0)$.
Now we apply a case distinction: 
If $x_0 = 0$, then $h_2(P_0) = 1$ follows, implying that $T_\Gamma(S_1) =1$.
If, on the other hand, $x_0 \ne 0$, then we can only have $\ord_\Gamma(h_2) \ne 0$ if $x_0
= a^3$, implying $y_0 = -a^{10}$ and hence $h_1(P_0) = 1$ and $T_\Gamma(S_1)=1$.
This proves that the class of $S_1$ in $K_2^T(C)/\mathrm{torsion}$ is integral.

Next we show that the 
element $\{\infty, O, Q\}$ 
is integral.
We work on the curve $\widetilde{D}$ which is defined as the image of $\widetilde{C}$ under
the transformation 
\[\widetilde{\varphi}(x,y) = (w,z) = \left(\frac{x}{y},\frac{1}{y}\right).\]
There is a commutative diagram
\[
\xymatrix{
  C \ar[r]^\varphi \ar[d]^{\psi} & D\ar[d]\\
  \widetilde{C} \ar[r]^{\widetilde{\varphi}} & \widetilde{D},\\
}
\]
where $D$ is as in the proof of Theorem~\ref{thm:spq_fam1} and the vertical morphism on the right maps $(w,z)$ to $(w,a^2z)$.
Therefore it follows that
\[
    \widetilde{\varphi}(\psi(\{\infty, O, Q\})) = \{-b^6z, 1-b^3w\}
\]
is integral using an argument analogous to the one employed above for $S_1$.
This finishes the proof of (i).

Now we move on to a proof of (ii).
Suppose that $d=1/a, b, c \in\Z$.
As in the proof of (i), we work on the curve $\widetilde{C}$ and we compute
\[
    S_2 := \psi(\{\infty, P, Q\}) = \{h_3, h_4\},
\]
where 
\[
  h_3 = -\frac{a^2(b^3+a)}{x-a^3},\quad h_4 = -\frac{(y-a^6b^3x)^4}{a^{26}(b^3+a)^4y}.
\]
If $p$ is a prime such that $\ord_p(a) <0$, then a simple calculation
shows that $h_3=1$ on
$\widetilde{\KC}_p$ and hence $T_\Gamma(S_2) = 1$ for every irreducible component
$\Gamma $ of $\widetilde{\KC}_p$.
Hence $S_2$ is integral for $p=2$ when $a=\pm\frac{1}{2}$ and $b=\mp 1$.

Suppose that $p$ is a prime such that $\ord_p(a) = 0$ and 
$\ord_p(b^3+a)=0$. 
Note that the second condition is satisfied for every prime 
$p\ne 2$
when $a=\pm\frac{1}{2}$ and $b=\mp 1$.
If $\Gamma$ is an irreducible component 
of $\widetilde{\KC}_p$ such that $\pi(\Gamma) =
\Delta$ is an irreducible component of  $\widetilde{C}^Z_p$, then the conditions on $p$
imply $\ord_\Gamma(h_3) = \ord_\Gamma(h_4) = 0$ and thus $T_\Gamma(S_2)=1$.

It remains to consider the situation where $\Gamma$ is an irreducible component 
of $\widetilde{\KC}_p$ such that $\pi(\Gamma) = P_0$ is a singular point of
$\widetilde{C}^Z_p$.
Such a point $P_0$ must be an affine point because $\ord_p(a) = 0$, say $P_0 = (x_0,y_0)$.
We distinguish cases as follows:

If $x_0 = 0$, then we have 
\[
    h_3(P_0) = \frac{b^3+a}{a} \quad h_4(P_0) = -\frac{y_0^3}{a^{26}(b^3+a)^4}. 
\]
Hence we deduce $\ord_\Gamma(h_3)=0$ and, if $y_0\ne0$, also $\ord_\Gamma(h_4) = 0$.
If, on the other hand, $y_0=0$, then we find $\ord_\Gamma(h_4) > 0$ which could
potentially cause problems. 
However, if $a=\pm\frac{1}{2}$ and $b = \mp 1$, then we actually have $\frac{b^3+a}{a} =
-1$, so that $T_\Gamma(2S_2)=1$.

If $x_0 \ne 0$, then we must also have $y_0 \ne 0$, due to the defining equation of
$\widetilde{C}$.
This final case is easy because of our knowledge of the zeros and
poles of $h_4$ and $h_3$ on $\widetilde{C}$: 
Namely, if $\ord_\Gamma(h_4) \ne 0$, then $P_0$ is the reduction of $\psi(Q)$ and therefore, by construction,
$h_3(P_0) =1$. 
Similarly, $P_0$ must be the reduction of $\psi(P)$ whenever $\ord_\Gamma(h_3) \ne 0$; hence $h_4(P_0)=1$.

We conclude that if $a= \pm\frac{1}{2}$ and $b=\mp1$, then $T_\Gamma(2S_2)$ is trivial
for all irreducible components.
The integrality of $2\{\infty, P, Q\}$ follows.
\end{proof}
\begin{rk}\label{rk:symm_int}
    If $c=0$ in Theorem~\ref{thm:spq_fam1}, then by Remark~\ref{rk:symm}, we have the additional points $P' = (-a^3, -a^4),\; Q'= (a^2b^3, -a^2b^6) \in C(\Q)$ 
    and the following additional elements of $K^T_2(C)/\mathrm{torsion}$:
\[
  \{\infty, O, {P'}\}, \{\infty, O, {Q'}\}, \{\infty, {P'}, {Q'}\},
  \{\infty, P, {Q'}\}, \{\infty, {P'}, Q\}
\]
If $1/a, b \in \Z$, then at least the first two of these are integral. 
 \end{rk}

Using Theorem~\ref{thm:spq_fam1} and Proposition~\ref{prop:spq_fam1_int} we can write down a one-parameter family of plane
quartics $C_t$ such that any smooth $C_t$ with $t \in \Z$ has at least~3  
representatives of elements of $K_2(C;\Z)$ which we can
describe explicitly. 
\begin{cor}\label{cor:quartic_family}
    Consider the family of plane quartics defined by
    \[
       C_t : y^3 + txy^2   + (1/8t + 3/16)y^2 - (t + 1/4)x^2y   - 1/8txy + 1/64y + x^4 =0
    \]
    where $t \in \Q$.
    Then we have the following properties:
    \begin{enumerate}[(i)]
        \item The curve $C_t$ is smooth unless $t \in \{1, -3, -5/2, -7/2\}$.
        \item If $t_1, t_2 \in \Z$ are distinct, then $C_{t_1}$ and $C_{t_2}$ are not isomorphic.
        \item If $t \in \Z\setminus\{1,-3\}$, then the following elements of
          $K^T_2(C_t)/\mathrm{torsion}$ are integral: 
                \[
                    \{\infty, O, P\}, \{\infty, O, Q\}, 2\{\infty, P, Q\},
                \]
                where $P = (-\frac{1}{8}, -\frac{1}{16}),\; Q= (-\frac{1}{4},
              -\frac{1}{4}) \in C_t(\Q)$.\\ 
            If $t \in \Z\setminus\{1,-3\}$, then by means of the tangent lines 
              $L_P:\, x=-1/8 $ and $L_Q:\,y=x $ at the points  $P = (-\frac{1}{8}, -\frac{1}{16}),\; Q= (-\frac{1}{4},
              -\frac{1}{4}) \in C_t(\Q)$ we get the following integral elements of   $K^T_2(C_t)/\mathrm{torsion}$:
              \[
                    \{\infty, O, P\}, \{\infty, O, Q\}, 2\{\infty, P, Q\},
                \]
    \end{enumerate}
\end{cor}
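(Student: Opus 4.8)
The plan is to recognize $C_t$ as the member of the family of Theorem~\ref{thm:spq_fam1} corresponding to the parameters $a = -1/2$, $b = 1$, $c = t$, and then to deduce (i), (iii) and (ii) from Theorem~\ref{thm:spq_fam1}, Proposition~\ref{prop:spq_fam1_int} and the discriminant formula in Remark~\ref{rk:disc}, respectively. First I would substitute $a=-1/2$, $b=1$, $c=t$ into the polynomials~\eqref{quartic-param}: one finds $f_1(x) = \frac{1}{64} - \frac18 t x - (t + \frac14)x^2$ and $f_2(x) = tx + \frac18 t + \frac{3}{16}$, and expanding $y^3 + f_2(x)y^2 + f_1(x)y + x^4$ reproduces precisely the displayed equation of $C_t$. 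The same substitution turns the points $P = (a^3, -a^4)$ and $Q = (-a^2b^3, -a^2b^6)$ of Theorem~\ref{thm:spq_fam1} into $P = (-\frac18, -\frac1{16})$ and $Q = (-\frac14, -\frac14)$, and the tangent lines $L_P : x = a^3$ and $L_Q : y = b^3 x$ into $L_P : x = -1/8$ and $L_Q : y = x$.

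For part (i) I would insert $a = -1/2$, $b = 1$, $c = t$ into the formula for $\mathrm{disc}(C)$ in Remark~\ref{rk:disc}. The factors $a^{42}$, $b^{36}$ and $(a + b^3)^3$ specialize to nonzero constants; the factors $(2a - 2b^3 - c)^6$, $(3a - 2b^3 - c)$, $(3a - b^3 - c)$ and $(6a + 2b^3 + c)^2$ become, up to sign, $(t+3)^6$, $-(t + \frac72)$, $-(t + \frac52)$ and $(t-1)^2$, which vanish exactly at $t = -3, -\frac72, -\frac52, 1$; and $q(-1/2, 1, t)$ equals, up to the nonzero constant $-1/32$, the cubic $g(t) = 32t^3 + 96t^2 + 228t - 115$. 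Here $g'(t) = 96t^2 + 192t + 228$ has negative discriminant and positive leading coefficient, so $g'(t) > 0$ for all $t$ and $g$ is strictly increasing; since $g(\frac{5}{16}) < 0 < g(\frac12)$, its unique real root lies in $(\frac{5}{16}, \frac12)$, an interval containing no candidate for a rational root of $g$ (no rational with numerator dividing $115$ and denominator dividing $32$), so $g$ has no rational root. Hence $\mathrm{disc}(C_t) \ne 0$ precisely when $t \notin \{1, -3, -\frac52, -\frac72\}$, and since the discriminant of a ternary quartic form vanishes if and only if the curve it defines is singular, this is assertion (i).

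For part (iii) I would apply Proposition~\ref{prop:spq_fam1_int} with the same parameters. For $t \in \Z \setminus \{1, -3\}$ the curve $C_t$ is smooth by (i), so $\mathrm{disc}(C_t) \ne 0$ and Theorem~\ref{thm:spq_fam1}(iii) provides the three classes $\{\infty, O, P\}$, $\{\infty, O, Q\}$ and $\{\infty, P, Q\}$ in $K_2^T(C_t)/\mathrm{torsion}$, realized via the tangent lines $L_P : x = -1/8$ and $L_Q : y = x$ identified above. Since $1/a = -2 \in \Z$, $b = 1 \in \Z$ and $c = t \in \Z$, Proposition~\ref{prop:spq_fam1_int}(i) shows that $\{\infty, O, P\}$ and $\{\infty, O, Q\}$ are integral; and since $(a, b) = (-\frac12, 1)$ has the shape $a = \pm\frac12$, $b = \mp 1$ with $c \in \Z$, Proposition~\ref{prop:spq_fam1_int}(ii) shows $2\{\infty, P, Q\}$ is integral. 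This gives (iii).

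The remaining assertion (ii) is the only one that does not reduce immediately to a result proved earlier, and verifying it is the step I expect to be the main obstacle, since it uses invariant theory of ternary quartics rather than the explicit divisor-theoretic arguments employed so far. My plan is to compute the Dixmier--Ohno invariants of $C_t$ as polynomials in $t$, for instance using the {\tt Magma} package {\tt Echidna}~\cite{echidna}; two smooth plane quartics are geometrically isomorphic if and only if the associated points in the relevant weighted projective space of invariants coincide, so a putative isomorphism $C_{t_1} \isom C_{t_2}$ forces a finite system of polynomial identities in $(t_1, t_2)$. Picking two algebraically independent absolute invariants (ratios of invariants of equal weighted degree), one reduces to checking that this system has no solution with $t_1 \ne t_2$ in $\Z$, a finite verification. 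This in fact yields injectivity of $t \mapsto C_t$ on all of $\Z$ up to geometric isomorphism, which is stronger than what is claimed.
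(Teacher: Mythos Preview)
Your argument for (i) and (iii) is exactly the paper's: recognize $C_t$ as the specialization $a=-\tfrac12$, $b=1$, $c=t$ of Theorem~\ref{thm:spq_fam1}, read off the discriminant from Remark~\ref{rk:disc}, and invoke Proposition~\ref{prop:spq_fam1_int} for integrality. (Your cubic factor $32t^3+96t^2+228t-115$, computed correctly from the formula for $q$ in Remark~\ref{rk:disc}, differs from the cubic $32t^3+96t^2-12t+5$ quoted in the paper's proof; neither has a rational root, so the conclusion is unaffected either way.)

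For (ii) the two arguments diverge. The paper does not pass to absolute invariants at all: it computes only the single Dixmier--Ohno invariant $I_3(C_t)=2^{-7}(16t^3+84t^2+98t-15)$ and observes by elementary means that this cubic is injective on $\Z$. Your plan---form two algebraically independent absolute invariants and verify that the resulting system $t_1\ne t_2$ has no integer solutions---is heavier computationally but also more robust, since $I_3$ by itself is only an $\mathrm{SL}_3$-invariant of the form and not an absolute invariant of the isomorphism class of the curve; strictly speaking, an isomorphism $C_{t_1}\cong C_{t_2}$ only forces the full tuple of Dixmier--Ohno invariants to agree as a point in weighted projective space, not $I_3$ individually. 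So what your approach buys is a genuinely complete argument, at the cost of a larger (but still finite and mechanical) computation; the paper's shortcut is quicker but leaves this scaling issue unaddressed.
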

\begin{proof}
The discriminant of $C_t$ factors as
\[
    2^{-52}(t-1)^2(t+3)^6(2t+5)(2t+7)(32t^3 + 96t^2 - 12t + 5),
\]
proving (i).

To prove (ii) it suffices to compute the first Dixmier-Ohno invariant (see~\cite{dixmier}) $I_3$ of $C_t$ which
is equal to $I_3 = 2^{-7}(16t^3 + 84t^2 + 98t - 15)$ and vanishes if and only if $C_t$ is
singular.
Elementary arguments show that no two distinct integers can lead to the same $I_3$.

The equation for $C_t$ is derived from the equation of the curve $C$ in
the  Theorem~\ref{thm:spq_fam1} by specialising to $a=-\frac{1}{2}$ and $b=1$ and
replacing $c$ by $t$.
Hence (iii) follows immediately from Proposition~\ref{prop:spq_fam1_int}.
\end{proof}
See Figure~\ref{fig:qex1} on page~\pageref{fig:qex1} for the case $t = 0$.

\begin{figure}
  \begin{center}
    \scalebox{.6}{\includegraphics[width=\textwidth]{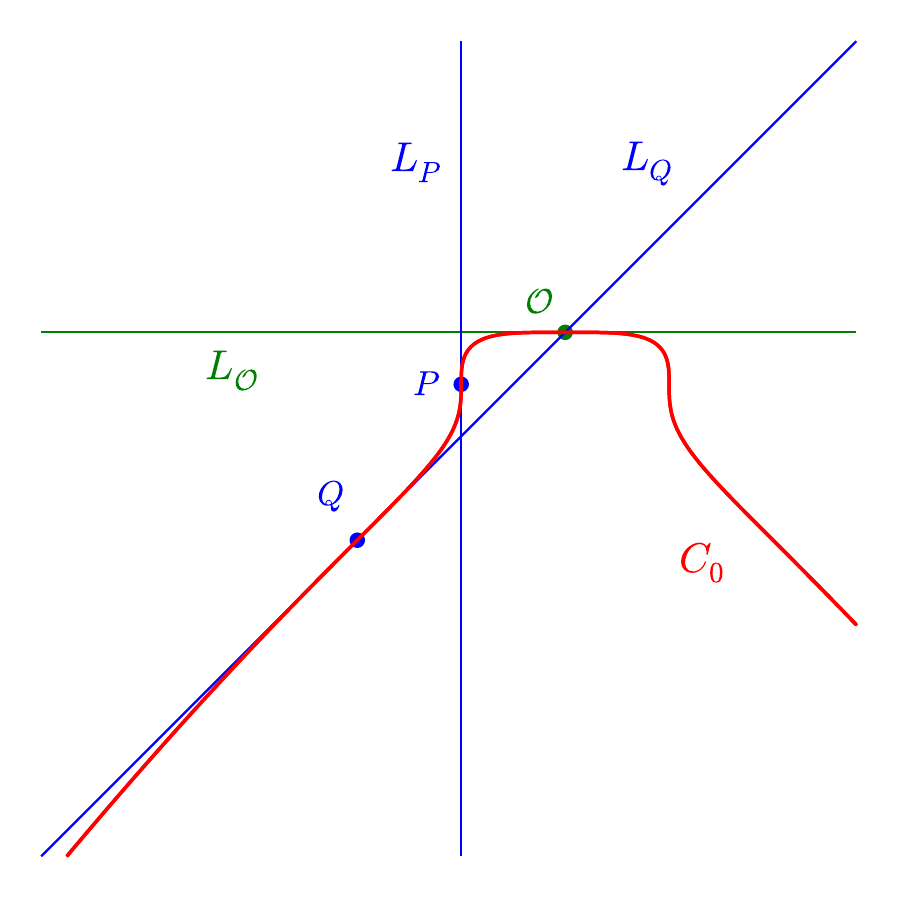}}
    \caption{The curve $C_0$ from Corollary~\ref{cor:quartic_family}}
  \label{fig:qex1}
  \end{center}
\end{figure}

\begin{rk}
    Instead of choosing $a=-\frac{1}{2}$ and $b=1$ in~\eqref{quartic-param}, we could have chosen the family of
    curves $C'_t$ obtained by setting $a=\frac{1}{2}$ and $b=-1$. 
    But if $t \in \Q$, then we have $C_t \cong C'_{-t}$ via $(x,y) \mapsto (-x,y)$.
\end{rk}

\section{Elements on quartics coming from conics and lines}\label{sec:conics}
If we want to apply Construction~\ref{constr:torsion}, then it is not
necessary to restrict to the situation where the elements in $K_2^T$ are constructed using
lines.
Instead it is also possible to use curves of higher degree
and we give an example of such a construction: We find a family of smooth
plane quartics over $\Q$ that have elements in $K_2^T$ which are constructed using lines and
conics.
See Figure~\ref{fig:qex2} on page~\pageref{fig:qex2}.

As in Section~\ref{sec:spq}, we work on smooth plane quartics with two hyperflex points
$\infty = (0:1:0)$ and $O = (0,0)$ with respective tangent lines $L_\infty$ (the line at
infinity) and $L_O : y=0 $, so
we may assume that they are given as in \eqref{eq:spq}, namely by an equation
\begin{equation}\label{eq:fcq}
  F(x,y) =  y^3 + f_{2}(x)y^{2} + f_1(x)y + x^4 = 0,
\end{equation}
where $f_1,\,f_2 \in \Q[x]$, $\deg(f_1) \le 2$ and $\deg(f_2) \le 1$.

Suppose that $D/\Q$ is a projective conic, defined by the affine equation 
\begin{equation}\label{conic}
  (y+d_1x + d_2)^2 + d_3x + d_4x^2=0,
\end{equation}  
where $d_1,\ldots,d_4 \in \Q$.
Then we can construct an equation~\eqref{eq:fcq} of a plane quartic $C$ having maximal contact with $D$.
\begin{lemma}\label{lemma:qc}
  Let $d_1,\ldots,d_4 \in \Q$ and let
  \begin{equation}\label{quarticconic}
    F(x,y) = ((y+d_1x + d_2)^2 + d_3x + d_4x^2)y + x^4.
  \end{equation}
  Then the plane quartic $C$ defined by $F(x,y)=0$ has contact
  multiplicity~8 with the conic $D$ at the point $R = (0,-d_2)$.
  Moreover, if $C$ is smooth, then we have an element 
  \[
    \{\infty, O, R\} \in K_2^T(C)/\mathrm{torsion}.
  \]
\end{lemma}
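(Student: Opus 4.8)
The plan is to verify the two assertions separately, the first by a direct intersection-multiplicity computation and the second by exhibiting the relevant torsion divisors so that Construction~\ref{constr:torsion} and Proposition~\ref{prop:constr_props} apply. First I would substitute the conic equation into $F$: writing $u = (y+d_1x+d_2)^2 + d_3x + d_4x^2$, we have $F(x,y) = uy + x^4$, so on the conic $D$ (where $u=0$) the quartic restricts to $x^4$. More precisely, to compute $I_R(C,D)$ at $R=(0,-d_2)$ I would work in the local ring at $R$, use $D$'s equation to express things in terms of a uniformizer on $D$, and observe that $x$ has a simple zero at $R$ along $D$ (since $D$ is smooth at $R$ with tangent not vertical — the tangent direction at $R$ is governed by $d_3x$, giving $\ord_R(x)=1$ along $D$ unless $d_3=0$, a degenerate case one notes separately or rules out via smoothness of $C$). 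Then $F|_D = x^4$ has a zero of order $4$ in $x$, hence order $\ge 4$ along $D$; combined with $\deg(C)\deg(D) = 8$ and the fact that $R$ is the only intersection point — which follows because away from $R$ the two branches of $u=0$ meet $x^4=0$ only where $x=0$, forcing $y=-d_2$ — one concludes $I_R(C,D) = 8$, the maximal value. This is the step I expect to be the main obstacle: being careful about the local structure of the conic at $R$ (whether it is smooth there, and the order of vanishing of $x$) and confirming that no intersection leaks out to infinity or to another affine point.

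Next I would record the divisor consequences. Since $L_O : y = 0$ is the tangent at the hyperflex $O$ with $\div(y) = 4(O) - 4(\infty)$ on $C$, and since $D$ has degree $2$ and meets $C$ only at $R$ with multiplicity $8 = 4\cdot 2$, the quotient $D/L_\infty^2$ (with $L_\infty$ the line at infinity) defines a function on $C$ with divisor $8(R) - 8(\infty)$. In particular $(O)-(\infty)$ and $(R)-(\infty)$ are torsion divisors, hence so is every difference $(S_1)-(S_2)$ with $S_1,S_2 \in \{\infty, O, R\}$.

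Finally, I would invoke Construction~\ref{constr:torsion} with the three points $\infty, O, R$ and the functions $y$ (for the edge $O$–$\infty$) and $D/L_\infty^2$ (for the edge $R$–$\infty$), together with a suitable third function for the edge $O$–$R$ coming from the linear combination of these two torsion relations; Proposition~\ref{prop:constr_props}(i) then gives that the resulting symbols lie in $K_2^T(C)$, and part (ii) yields a well-defined element $\{\infty, O, R\} \in K_2^T(C)/\mathrm{torsion}$. The smoothness hypothesis on $C$ is used exactly to ensure $C$ is a smooth projective curve so that the whole framework of Section~\ref{sec:k2} and Construction~\ref{constr:torsion} is available. I would close by noting that this is precisely the pattern of Construction~\ref{constr:basic} with the curve $D$ playing the role of the auxiliary curve meeting $C$ in a single point.
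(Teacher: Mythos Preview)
Your overall strategy matches the paper's: reduce the intersection computation to the restriction $F|_D = x^4$, then record the torsion divisors $\div(y)=4(O)-4(\infty)$ and $\div(u)=8(R)-8(\infty)$ and invoke Construction~\ref{constr:torsion}. The second half of your argument is essentially identical to the paper's.

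There is, however, a genuine slip in your local computation. You claim that the tangent to $D$ at $R=(0,-d_2)$ is \emph{not} vertical and hence $\ord_R(x|_D)=1$. In fact the gradient of $u=(y+d_1x+d_2)^2+d_3x+d_4x^2$ at $R$ is $(d_3,0)$, so the tangent line is $x=0$, which \emph{is} vertical; equivalently, restricting $D$ to the $y$-axis gives $(y+d_2)^2$, so $I_R(L,D)=2$ where $L$ is the line $x=0$. Thus $x$ vanishes to order $2$ along $D$ at $R$, and $x^4$ to order $8$, giving $I_R(C,D)=8$ directly. Your conclusion survives only because you separately argue that $R$ is the unique intersection point and then appeal to B\'ezout; that part is fine (and you should also note, as is easy, that $\infty=(0{:}1{:}0)$ does not lie on $D$, so no intersection escapes to infinity).

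The paper's proof avoids this pitfall by the clean manoeuvre $I_R(C,D)=I_R(x^4,D)=4\,I_R(L,D)=4\cdot 2=8$, using that one may replace $F$ by $F-y\cdot u=x^4$ without changing the intersection number with $D$. This is the same idea as your ``restrict $F$ to $D$'' step, just packaged so that the order of $x$ along $D$ is read off immediately from $I_R(L,D)$.
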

\begin{proof}
  Since we can add a multiple of the defining equation of $D$ to the defining equation of
  $C$ without changing the intersection multiplicity $I_R(C,D)$ between $C$ and $D$ at $R$,
  the latter is equal to $4I_R(L, D)$, where $L$ is the $y$-axis.
  Hence $I_R(C,D) = 8$. 
  This means that the function
  \[
  h = (y+d_1x + d_2)^2 + d_3x + d_4x^2 \in K(C)^\times
  \]
  satisfies 
  \[
    \div(h) = 8(R) - 8(\infty).
  \]
  Because we have 
  \[
    \div(y) = 4(O) - 4(\infty)
  \]
  as before, Construction~\ref{constr:torsion} is applicable and the claim follows.
\end{proof}
So if $C$ is as in Lemma~\ref{lemma:qc}, and $C$ is smooth, then we know one representative of an element of
$K_2^T(C)/\mathrm{torsion}$.
Smoothness can be checked by computing the discriminant of $C$ using {\tt
Echidna}~\cite{echidna}; it turns out that, in particular, $C$ is singular when $d_2=0$ or
$d_3=0$.
In order to construct further elements, we can combine Lemma~\ref{lemma:qc} with the approaches
discussed in Section~\ref{sec:spq}.
Namely, we can choose the coefficients $d_i$ so that, in addition to intersecting $D$ in
$R$ with multiplicity~8, $C$ has
contact multiplicity~3 with a vertical tangent line $L_P$ in a $\Q$-rational point $P$.
This forces certain conditions on the $d_i$ to be satisfied; these are spelt out in 
Lemma~\ref{lemma:quartic_vertical}.

\begin{ex}\label{ex:qcl1}
Let $a, d_1, d_4\in \Q$ and let
\[d_2 = \frac{3}{2}a^4-a^3d_1\;\;\textrm{and}\;\;d_3 = \frac{3}{4}a^5-d_4a^3.\] 
Then, if the discriminant of the plane quartic $C:F=0$ is nonzero, where $F$ is as in~\eqref{eq:fcq}, 
the conditions of Lemma~\ref{lemma:quartic_vertical} are satisfied for the point
 $P=(a^3, -a^4)\in C(\Q)$. Namely, $C$ is smooth and the vertical line
 $L_P : x=a^3$ intersects $C$ in $P$ with multiplicity~3 and in $\infty$ with multiplicity~1.
 By~Lemma~\ref{lemma:quartic_vertical} and Lemma~\ref{lemma:qc},
 the following are elements of $K_2^T(C)/\mathrm{torsion}$:
\[
    \{\infty, O, P\}, \{\infty, O, R\}, \{\infty, P, R\}
\]
Here the discriminant of $C$ factors as
\[
  a^{42}(3a^2-4d_4)^2(3a-2d_1)^8(13a^2-4ad_1-4d_4)^3q(a,d_1,d_4),
\]
where $q$ is homogeneous of degree~12 if we endow $a$ and $d_1$ with weight~1 and $d_4$
with weight~2.
\end{ex}

In fact we can do better: We can find a subfamily of the family from Example~\ref{ex:qcl1}
in two parameters such that there are two vertical tangent lines $L_{P_1}$ and $L_{P_2}$
intersecting $C$ in $\Q$-rational points $P_1$ and $P_2$, respectively, with contact
multiplicity~3. 
To this end, we simply have to solve a system of linear equations in $d_1,\ldots,d_4$,
given by the conditions of Lemma~\ref{lemma:quartic_vertical}.
\begin{ex}\label{ex:qcl2}
Let $a_1,\, a_2\in \Q$ and set
\[
    d = a_1^2+a_1a_2+a_2^2,\;
    d_1 = \frac{3}{2d}(a_1^3+a_1^2a_2+a_1a_2^2+a_2^3), \;
    d_2 = -\frac{3}{2d}a_1^3a_2^3, 
\]
\[d_3 = -\frac{3}{4d}a_2^3a_1^3(a_1+a_2),\;
    d_4 =
    \frac{3}{4d}(a_1^4+a_1^3a_2+a_1^2a_2^2+a_1a_2^3+a_2^4).
\]
Let $F$ be as in~\eqref{eq:fcq} and let $C$ be the projective closure of the affine curve
given by $F=0$.
Then, if $\disc(C)\ne 0$, the vertical line $L_{P_i} : x=a_i^3$ is tangent to $C$ in the
point $P_i = (a_i^3,-a_i^4) \in C(\Q)$, where $1\le i\le 2$.
In this case, the following are elements of $K_2^T(C)/\mathrm{torsion}$:
\[
  \{\infty, O, {P_1}\}, \{\infty, O, R\}, \{\infty, R, {P_1}\},
  \{\infty, O, {P_2}\}, \{\infty, R, {P_2}\}
\]
The discriminant of $C$ factors as
\begin{align*}
  \disc(C) =&  -\frac{43046721a_1^{42}a_2^{42}}{67108864(a_1^2+a_1a_2+a_2^2)^{22}}(4a_1^3+8a_1^2a_2+12a_1a_2^2+3a_2^3)^3(a_1-a_2)^4(a_2+a_1)^2\\
  &(a_1^8+22a_1^7a_2+67a_1^6a_2^2+140a_1^5a_2^3+161a_1^4a_2^4+140a_1^3a_2^5+67a_1^2a_2^6+22a_1a_2^7+a_2^8)\\
  &(3a_1^3+12a_1^2a_2+8a_1a_2^2 +4a_2^3)^3.
\end{align*}
\end{ex}

Finally, we can also combine Lemma~\ref{lemma:qc} with Theorem~\ref{thm:spq_fam1} and choose the
$d_i$ in such a way that $C$ has contact multiplicity~3 with the tangent line to $C$ in a
$\Q$-rational point $P$ (resp. $Q$)
and such that this tangent line also intersects $C$ in $\infty$ (resp. $O$).
This leads to the following family in two parameters.
\begin{ex}\label{ex:qcl3}
Let $a,b \in \Q$ such that
  \[ d_1 = b^3+\frac{3}{2}a,\; d_2 = -a^3b^3,\; d_3 = 2a^3(2b^3+3a)b^3,\; d_4 =
-4b^6-6ab^3+\frac{3}{4}a^2; \]
Let $F$ be as in~\eqref{eq:fcq} and let $C$ be the projective closure of the affine curve
given by $F=0$.
If 

\begin{align*}
  \disc(C) = &
-4a^{42}b^{42}(-8b^{18}-36ab^{15}-18a^2b^{12}+189a^3b^9+351a^4b^6+162a^5b^3+27a^6)\\
&(2b^3+3a)^2 (144b^{12}+576ab^9+504a^2b^6+112a^3b^3+9a^4)^2\\
\ne &0,
\end{align*}
then $C$ is smooth and the following are elements of $K_2^T(C)/\mathrm{torsion}$
\[
    \{\infty, O, P\}, \{\infty, O, R\}, \{\infty, R, P\},
    \{\infty, O, Q\}, \{\infty, R, Q\}, \{\infty, P, Q\},
\]
where $P= (a^3,-a^4) \in C(\Q)$  and $Q = (-a^2b^3,-a^2b^6) \in C(\Q)$.
See Figure~\ref{fig:qex2} on page~\pageref{fig:qex2} for the curve 
$C:x^4 + 1/64((2x - 8y - 1)^2 - 52x^2 + 8x)y=0$, corresponding to the choice
$a=1/2,b=-1$.
\end{ex}

\begin{rk}
    It would be interesting try to find~3 integral elements in the families discussed
    above.
    We have not checked under which conditions the elements constructed in
    the present section are integral, as our focus
    is on the geometric picture: One can force a curve $C$ to have specific intersection
    properties with other curves of varying degrees to produce elements of $K_2^T(C)/\mathrm{torsion}$.
\end{rk}
\begin{figure}[h]
  \begin{center}
    \scalebox{.6}{\includegraphics[width=\textwidth]{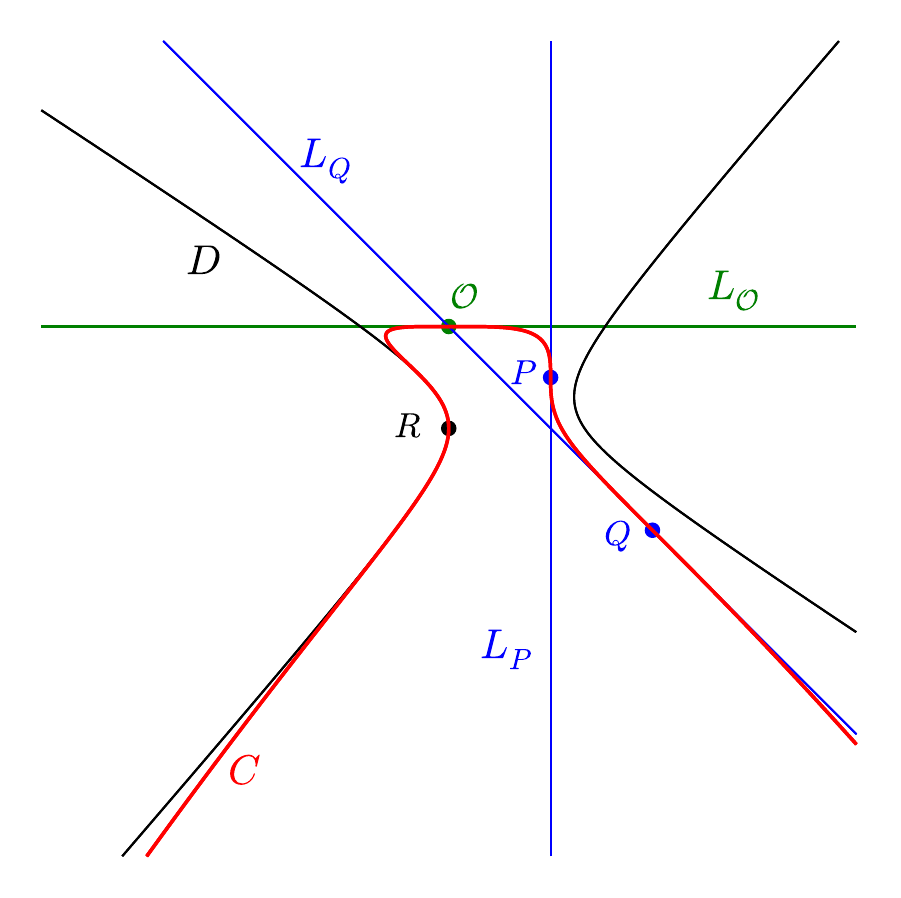}}
    \caption{The curve $C:x^4 + 1/64((2x - 8y - 1)^2 - 52x^2 + 8x)y=0$}
  \label{fig:qex2}
  \end{center}
\end{figure}

\section{Nekov\'a\v{r}-type constructions}\label{sec:nekovar}
In \cite{SR} a family of elliptic curves $E_a$ equipped with  an element in
$K_2^T(E_a)\otimes \Q$ is studied. The remarkable fact about this family is that this
element corresponds to a non-torsion divisor. The authors of~\cite{SR} attribute this
family to J. Nekov\'a\v{r}. We will recover this and similar construction in our setup. The
theoretical basis of such constructions can be phrased geometrically, as in the following lemma.  
In its formulation we denote, by abuse of
notation as before, the defining polynomial of a plane curve by the same letter as the curve itself.

\begin{lemma}\label{lemma:nekovar}  
Let $C$ be a {smooth projective} plane curve defined over $\Q$.
We assume that there are 3 pairwise distinct plane curves $E, G$ and $H$,
defined over
$\Q$, of the same degree and having pairwise distinct intersection with
$C$, which define functions $g$ and $h$
on $C$ via the quotients of their defining polynomials as $ g:= G/E$ and $h:=H/E$,
 satisfying the following properties: 
\begin{itemize}
\item There is a rational point $\infty$ such that  $C$ and $E$ have maximal contact in $\infty$.
\item For all $P$ in the intersection of $G$ and $C$ the divisor $(P)-(\infty)$ is a torsion divisor, i.e. if 
\[
  C \cap G = \sum a_i P_i
\]
then there exist functions $g_i$ and integers $m_i$ with
\[
 \div (g_i) = m_i  ( P_i) - m_i(\infty).
 \]
\item There is a constant $\kappa \in \Q^\times$ such that for all $Q$ in the intersection of $H$ and $C$ the
   value of $g$ at $Q$ equals $\kappa$.
\end{itemize}

Then replacing $g$ by $\tilde{g} := g/\kappa$ and setting $\kappa_i =  T_{P_i}(\{\tilde{g},h\}) $, we get the element
\begin{align}\label{eq:neko-elem}
   m \{\tilde{g},h\}   - \sum_i \frac{m}{m_i} \{ \kappa_i , g_i\}
\in K^T_2(C), 
\end{align}
where  $m=\operatorname{lcm}(\{ m_i\})$ is the least common multiple of the multiplicities $m_i$.
 \end{lemma}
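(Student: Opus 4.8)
The plan is to show that the alleged element \eqref{eq:neko-elem} lies in the kernel of the total tame symbol $T=\prod_{P\in C^{(1)}}T_P$, i.e.\ that $T_P$ of it is trivial at every closed point $P$ of $C$. The candidate element is built out of the symbol $\{\tilde g,h\}$, whose tame symbols we have to understand, corrected by the "torsion" symbols $\{\kappa_i,g_i\}$, which are essentially of the type appearing in Construction~\ref{constr:torsion}. The key point is to locate where $\{\tilde g,h\}$ can have nontrivial tame symbol: since $g=G/E$ and $h=H/E$ are quotients of forms of equal degree, the functions $\tilde g$ and $h$ on $C$ have divisors supported only on $C\cap E$, $C\cap G$, $C\cap H$. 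By the maximal contact hypothesis, $\div(\tilde g)$ and $\div(h)$ both contain $-\deg$-many copies of $\infty$ in their $E$-part, and the rest of the support of $\div(\tilde g)$ is $\sum a_i P_i = C\cap G$, while the rest of the support of $\div(h)$ is $C\cap H$. Crucially, $C\cap G$, $C\cap H$ and $\{\infty\}$ are pairwise disjoint (pairwise distinct intersections with $C$), except that all three divisors share the point $\infty$.

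First I would compute $T_P(\{\tilde g,h\})$ at each point $P$:
\begin{itemize}
\item At a point $P\notin \supp\div(\tilde g)\cup\supp\div(h)$, both orders vanish, so $T_P=1$.
\item At a point $Q\in C\cap H$ with $Q\ne\infty$: here $\ord_Q(\tilde g)=0$ and $\ord_Q(h)=\ord_Q(H)>0$, so $T_Q(\{\tilde g,h\}) = \tilde g(Q)^{\ord_Q(h)}$. By the third hypothesis $g(Q)=\kappa$ for all such $Q$, hence $\tilde g(Q)=1$ and $T_Q=1$. (This is exactly the reason one must rescale $g$ to $\tilde g$.)
\item At a point $P_i\in C\cap G$ with $P_i\ne\infty$: here $\ord_{P_i}(\tilde g) = a_i>0$ and $\ord_{P_i}(h)=0$, so $T_{P_i}(\{\tilde g,h\}) = h(P_i)^{-a_i}$, which is some nonzero constant $\lambda_i\in K(P_i)^\times$; I will relate $\kappa_i$ to this. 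Indeed $\kappa_i := T_{P_i}(\{\tilde g,h\})$ by definition, so $\kappa_i = h(P_i)^{-a_i}\in K(P_i)^\times$, a constant.
\item At $\infty$: by the product formula \eqref{eq:prod_form}, or directly, the tame symbol at $\infty$ is whatever is forced by the contributions elsewhere; I will treat it via reciprocity rather than by direct computation.
\end{itemize}
Then I examine the correction terms $\{\kappa_i,g_i\}$. Since $\kappa_i$ is a constant and $\div(g_i) = m_i(P_i)-m_i(\infty)$, the symbol $\{\kappa_i,g_i\}$ has $T_P(\{\kappa_i,g_i\})=1$ for all $P\notin\{P_i,\infty\}$, and $T_{P_i}(\{\kappa_i,g_i\}) = \kappa_i^{\,\ord_{P_i}(g_i)} = \kappa_i^{m_i}$ (up to sign, which is trivial since $\ord_{P_i}(\kappa_i)=0$), while $T_\infty(\{\kappa_i,g_i\}) = \kappa_i^{-m_i}$.

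Now I assemble: for the element $m\{\tilde g,h\} - \sum_i \frac{m}{m_i}\{\kappa_i,g_i\}$, at a point $P_i$ we get $T_{P_i} = \kappa_i^{m} \cdot \kappa_i^{-\frac{m}{m_i}\cdot m_i} = \kappa_i^m\cdot\kappa_i^{-m} = 1$. At points of $C\cap H$ and at all points outside the relevant support, $T_P=1$ by the computations above. It remains to check $T_\infty=1$: the $\infty$-contribution of $m\{\tilde g,h\}$ is cancelled by $-\sum_i \frac{m}{m_i}T_\infty(\{\kappa_i,g_i\}) = \sum_i \frac{m}{m_i}\kappa_i^{m_i}\cdot(\text{sign})$ — here I would invoke the product formula \eqref{eq:prod_form} for the symbol $m\{\tilde g,h\}$ together with the already-established triviality of $T_P$ at all $P\ne\infty$ to conclude that $T_\infty(m\{\tilde g,h\})$ equals precisely $\prod_i \kappa_i^{-m}$ up to norms, matching the correction, so the total $T_\infty$ is trivial.

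The main obstacle I anticipate is the bookkeeping at $\infty$: one must be careful that the "maximal contact" hypothesis really does pin down $\ord_\infty(\tilde g)$ and $\ord_\infty(h)$, that the signs $(-1)^{\ord_\infty(\tilde g)\ord_\infty(h)}$ and the norm from $K(\infty)=K$ behave, and that $\infty$ does not accidentally lie on $G$ or $H$ in a way that changes the $g_i$ picture (the hypotheses say $\infty$ is the maximal-contact point with $E$, and one should record that then $\infty$ appears in $C\cap G$ and $C\cap H$ only through the shared role, not as an extra $P_i$). A clean way to finish without case-chasing at $\infty$ is: we have shown $T_P$ of the candidate element vanishes for every $P\ne\infty$; applying the product formula \eqref{eq:prod_form} to the candidate element (which is a genuine element of $K_2(K(C))$) forces $N_{K(\infty)/K}(T_\infty)=1$, and since $\infty$ is a rational point this gives $T_\infty=1$ directly. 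Hence the candidate lies in $K_2^T(C)$, as claimed.
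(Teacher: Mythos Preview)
Your proof is correct and follows essentially the same route as the paper: verify $T_P=1$ at all points $P\ne\infty$ by treating the $Q_j\in C\cap H$ (where $\tilde g(Q_j)=1$) and the $P_i\in C\cap G$ (where the correction term $\frac{m}{m_i}\{\kappa_i,g_i\}$ is designed to cancel $\kappa_i^m$) separately, and then invoke the product formula~\eqref{eq:prod_form} to handle $\infty$. Your intermediate attempt to compute $T_\infty$ directly is unnecessary and a bit muddled; the clean finish you give at the end --- triviality at all $P\ne\infty$ plus the product formula forces $T_\infty=1$ since $\infty$ is rational --- is exactly how the paper argues.
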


\begin{proof} It suffices to prove that the tame symbol equals $1$ at all 
critical points except $\infty$; because of the product formula \eqref{eq:prod_form} 
the claim then follows.
  At each  $Q$ in the support of $H \cap C$ we have $T_{Q}(\{\tilde{g},h\} ) =1$ by construction 
  and $T_{Q}(\{\kappa_i,g_i\}) =1$ by assumption. 
For each $P_i$  in the support of 
$G\cap C$ we get   $T_{P_i}(\{\kappa_j , g_j\}) = 1$  if $i\neq j$ and else
\[
T_{P_i}(\{\kappa_i , g_i\}) = (-1)^{\ord_{P_i}(\kappa_i)\ord_{P_i}(g_i)}
    \frac{\kappa_i^{\ord_{P_i}(g_i)}}{g_i^{\ord_{P_i}(\kappa_i)}}(P_i)
    = 
    \frac{\kappa_i^{m_i}}{g_i^{0}}(P_i) = \kappa_i^{m_i} =   T_{P_i}( m_i \,\{\tilde{g},h\})
\]
\end{proof}

Observe that by Galois descent the  Nekov\'a\v{r} element in \eqref{eq:neko-elem} 
is defined over $\Q$, although the $P_i$ or the $Q_j$ might not be rational points.
If we divide the Nekov\'a\v{r} element in \eqref{eq:neko-elem}  by $m$  
we get an element in $K_2(C)\otimes \Q$ instead. Thus in the case of elliptic curves we recover
the construction 5.1 of \cite{SR}. We do not study the integrality of such elements in our
examples, instead we refer the interested reader to  \cite[\S5.2]{SR} for the  case of elliptic curves.

\begin{rk}
The assumptions of the Lemma can be weakenend as follows: Firstly, we do not have
to assume that the curves $E,G$ and $H$ have the same degree, because we can replace the
equation of a curve by some power, if needed. 
 Secondly, if the values of the function 
$g$ at the $Q_j$ differ by some root of unity, then we just need to replace $g$ by some power. 
Thirdly, the point $\infty$ can be a singular point (but there are no other singular
points).
In this case the condition on $F$ means that on the normalization of $C$, the zero divisor
of $g_i$ is a multiple of $(P_i)$ and the pole divisor is supported in the points mapping
to $\infty$. 
If there is only one such point, then the requirements of Remark~\ref{rk:prod_sing} are
satisfied, and the proof of Lemma~\ref{lemma:nekovar} goes through, yielding an element in
$K_2^T\otimes\Q$ of the normalization of $C$.
If there are several points $\infty_1,\ldots,\infty_m$ above $\infty$ in the normalization
of $C$, Remark~\ref{rk:prod_sing} does not apply, but if we can
show that for the elements in question, the tame symbol at all $\infty_i$ is~1, then the conclusion of the Lemma still holds.
\end{rk}

\begin{ex}\label{ex:ell_2tor} 
We want to apply the Lemma \ref{lemma:nekovar} to a family of elliptic curves.
We let $E$ be the tangent line at $\infty$ and for $G$ we take the line $y=0$; 
the points $P_i$ are therefore the affine $2$-torsion points $(x_i,0)$ and the functions $g_i$ are
given by the vertical tangent lines $x-x_i$ in the points $P_i$. 

Consider the family
\[C_r: {y}^{2} = {x}^{3} + \left( -\frac{1}{3}+\frac{2}{3}\,r-\frac{4}{3}\,{r}^{2} \right) x + {\frac {2}{27}}-\frac29\,r-\frac59\,{r}^{2}+{\frac {16}
{27}}\,{r}^{3} ,\]
where we exclude the finite set of those $r \in \Q$ for which the curve becomes singular.
Then for each $r \in \Q$ the line
\[H_r : y = -x+\frac{1}{3} -\frac{1}{3} r
\]
meets $C_r$ in the points $Q_1 = ( -\frac{4}{3} r + \frac{1}{3},   r) $ and $Q_2 = (\frac{2}{3} r
+\frac{1}{3} , -r)$ to order $1$ and
$2$, respectively. Finally, since 
$| y(Q_1)|= |y(Q_2)|=r$ the assumptions of Lemma \ref{lemma:nekovar} are satisfied.
See Figure~\ref{fig:ell_ex} for the case $r=3/4$.
  Note that the geometric configuration is the same as in the family of elliptic curves
  due to Nekov\'a\v{r} and discussed in~\cite{SR}. In particular, that family can also be
  constructed using our approach.
  Nevertheless, one can show that it is not isomorphic over $\Q$ to the family $\{C_r\}$.
\begin{figure}[h]
  \begin{center}
    \scalebox{.6}{\includegraphics[width=\textwidth]{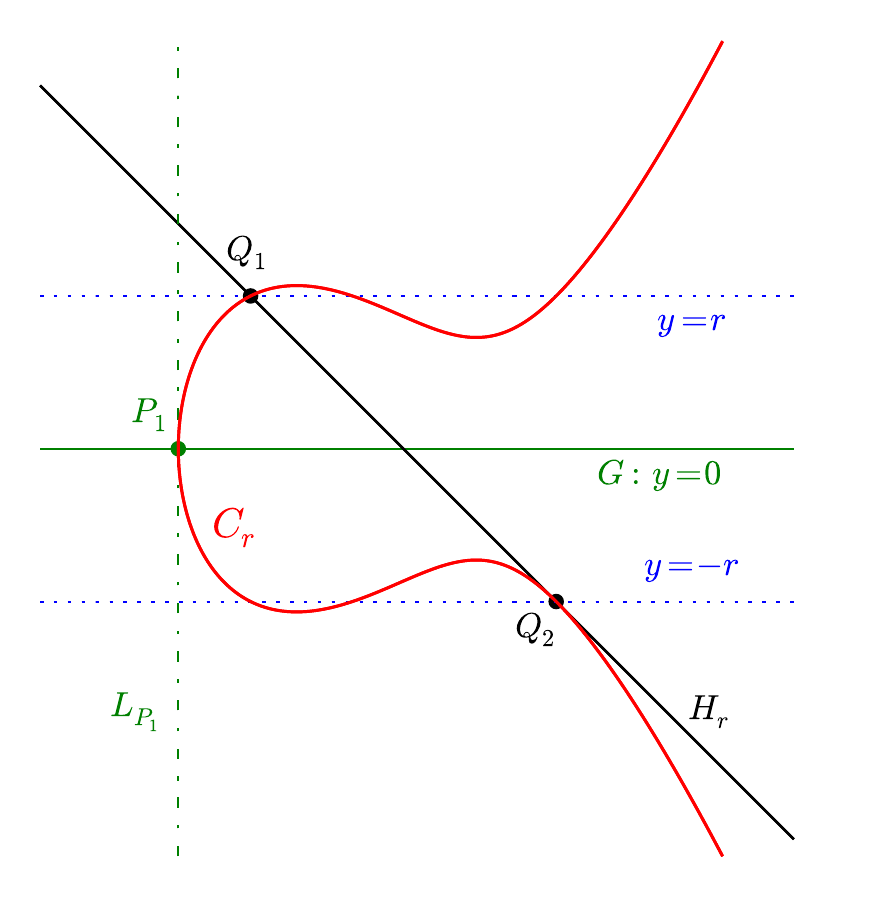}}
    \caption{The curve $C_r$ from Example~\ref{ex:ell_2tor} for $r=3/4$}
  \label{fig:ell_ex}
  \end{center}
\end{figure}
\end{ex}

\begin{ex}\label{ex:ell_3tor}
Lemma~\ref{lemma:nekovar} also applies to the family of elliptic curves  given by 
\[
  C_r: y^2 + f_1(x)y +x^3 = 0,
 \]
 where $f_1(x) = r - (4r+1)x$ and  
where $r \in\Q\setminus\{0,-1, 1/8\}$.
There is only one point $\infty$ at infinity and its tangent to $C$ has maximal contact for all $r$.
The line $G$ given by  $y=0$ meets $C$ only in the origin $(0,0)$, which is therefore a
$3$-torsion point and we can take $g_1 = y$. 
Finally the line $H:  y=x-r$ intersects $C$ in exactly two points $Q_1= (0,-r)$ and
$Q_2=(2r,r)$.
\end{ex}

\begin{ex}\label{ex:hyp_nek}
We can extend the approach of
Example~\ref{ex:ell_3tor} to curves genus $g=2$ (we have not attempted to treat
hyperelliptic curves of higher genus).
This means that we want our curve to have a point $O$ of maximal contact multiplicity,
so we are essentially in the situation discussed in detail in Section~\ref{sec:hyp}.

In order to construct our family, we work with affine models of the form
\begin{equation}\label{eq:2inf_model}
   y^2 + f_1(x)y +x^5 = 0,
\end{equation}
where $f_1$ has degree at most~3. Note that in Section~\ref{sec:hyp}, we required
$\deg(f_1) \le 2$ in the case of curves of genus~2, which guarantees that there is a unique point at
infinity on a smooth model.
Geometrically, we again work on the singular model $C'$ defined as the projective closure of the
affine curve given by~\eqref{eq:2inf_model}.

In the language of Lemma~\ref{lemma:nekovar}, we let $E=L_\infty$ be the line intersecting $C'$ in
the point $\infty$ at infinity with maximal contact multiplicity.
The point $O = (0,0)$ is a point of maximal contact between $C'$ and the
tangent line $G=L_O :\, y=0$ to $C'$ in $O$, so we take $g_1 = y$.

In order to apply Lemma~\ref{lemma:nekovar}, we want a line $H$, which intersects $C'$ in exactly two points $Q_1$ and
$Q_2$, whose $y$-coordinates have absolute value equal to the same rational number $r$.
For instance, we can require that $Q_1$ is a~3-contact point and that $Q_2$ is a~2-contact
point.
Rewriting this condition as a system of equations, we find that there is no such example
if $\deg(f_1)\le2$. Hence we have to allow $\deg(f_1)=3$.
We find the family of genus~2 curves $C_r$ given by~\eqref{eq:2inf_model} with
\[
  f_1 = r-x+4rx^3;
\]
these curves are smooth unless $r \in \{0,1/3\}$.
Here, in analogy with  Example~\ref{ex:ell_3tor}, the  line $H_r:   y = x - r $ meets $C$ in the points $Q_1= (0,-r)$ and $Q_2=(2r,r)$ 
with multiplicities $3$ and $2$ respectively.
See Figure~\ref{fig:g2_ex} for the case $r=1/2$. 

\begin{figure}[h]
  \begin{center}
    \scalebox{.6}{\includegraphics[width=\textwidth]{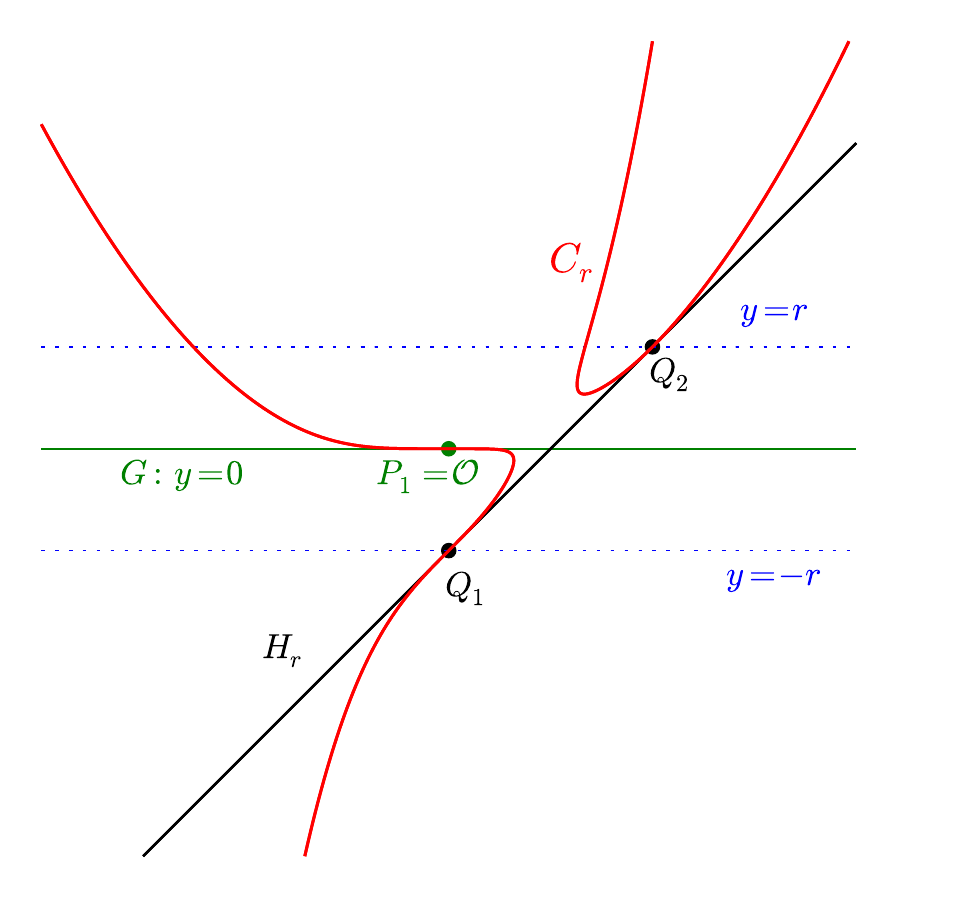}}
    \caption{The curve $C_r$ from Example~\ref{ex:hyp_nek} for $r=1/2$}
  \label{fig:g2_ex}
  \end{center}
\end{figure}

It remains to check that we actually get an element of $K_2^T(C_r)$.
The only potential problem is that we need to check that the tame symbols at the two points at
infinity on a nonsingular model of $C_r$ are equal to~1, but in our specific situation
this is easily seen to hold by a simple computation once we have scaled $h_r$ by $r^{-1}$. 
\end{ex}

\bibliography{k2-biblio}
\bibliographystyle{alpha}

\end{document}